\newtheorem{remark}[theorem]{Remark}
\newtheorem{example}[theorem]{example}
\renewcommand\arraystretch{1.5}
\newcounter{mnote}
\let\oldmarginpar\marginpar
\renewcommand\marginpar[1]{\-\oldmarginpar[\raggedleft\footnotesize #1]%
	{\raggedright\footnotesize #1}}
\begin{document}
		\headers{Randomized Fast Subspace Descent Methods}
	{L. Chen, X. Hu, and H. Wu}
	\title{ Randomized Fast Subspace Descent Methods\thanks{The work of X.~Hu is partially supported by the National Science Foundation under grant DMS-1812503 and CCF-1934553. L. Chen and H. Wu are partially supported by the National Science Foundation under grant DMS-1913080.}}
\author{Long Chen\thanks{Department of Mathematics, University of California at Irvine, Irvine, CA 92697, USA\,(\email{chenlong@math.uci.edu}).}
\and Xiaozhe Hu \thanks{Department of Mathematics, Tuffs University, Medford, MA 02155, USA \, (\email{Xiaozhe.Hu@tufts.edu})}
	\and Huiwen Wu \thanks{Department of Mathematics, University of California at Irvine, Irvine, CA 92697, USA\,(\email{huiwenw@uci.edu}).}}

	\maketitle

	% REQUIRED
\begin{abstract}
Randomized Fast Subspace Descent (RFASD) Methods are developed and analyzed for smooth and non-constraint convex optimization problems. The efficiency of the method relies on a space decomposition which is stable in $A$-norm, and meanwhile the condition number $\kappa_A$ measured in $A$-norm is small. At each iteration, the subspace is chosen randomly either uniformly or by a probability proportional to the local Lipschitz constants. Then in each chosen subspace, a preconditioned gradient descent method is applied. RFASD converges sublinearly for convex functions and linearly for strongly convex functions.  Comparing with the randomized block coordinate descent methods, the convergence of RFASD is faster provided $\kappa_A$ is small and the subspace decomposition is $A$-stable. This improvement is supported by considering a multilevel space decomposition for Nesterov's `worst' problem. 
\end{abstract}
	
	% REQUIRED
	\begin{keywords}
		Convex optimization, randomized methods, subspace decomposition	
	\end{keywords}
	
	% REQUIRED
	\begin{AMS}
%						65B99, %Acceleration of convergence
65K05,  %Mathematical programming methods 				
 					90C25. %Convex programming
	\end{AMS}

	\tableofcontents

\section{Introduction}
We consider the non-constraint convex minimization problem 
\begin{equation}\label{eq:min}
\min_{x \in \mathcal{V}} f(x),
\end{equation}
where $f$ is a smooth and convex function and its derivative is Lipschitz continuous with constant $L$ and $\mathcal{V}$ is a Hilbert space. In practice, $\mathcal{V} = \mathbb{R}^N$ but might be assigned with an inner product other than the standard $l^2$ inner product of $\mathbb{R}^N$. Solving minimization problem \eqref{eq:min} is a central task with wide applications in fields of scientific computing, machine learning, and data science, etc.

Due to the eruption of data and the stochasticity of the real world, randomness is introduced to make algorithms more robust and computational affordable. In the following, we will restrict ourselves to randomized algorithms related to the coordinate descent (CD) method~\cite{ortegaIterativeSolutionNonlinear2000,bertsekasParallelDistributedComputation1989,luoConvergenceCoordinateDescent1992} and its block variant, i.e., the block CD (BCD) method~\cite{ortegaIterativeSolutionNonlinear2000,bertsekasParallelDistributedComputation1989,bertsekasNonlinearProgramming1997,tsengConvergenceBlockCoordinate2001,BeckTetruashvili2013} and propose a new algorithm generalizing the randomized CD (RCD) and randomized BCD (RBCD) methods. 

In~\cite{nesterov2012efficiency}, Nesterov studied a RBCD method for huge-scale optimization problems.  Assuming the gradient of the objective function $f$ is coordinate-wise Lipschitz continuous with constants $L_i$, at each step, the block coordinates is chosen randomly with probability $p_i = L_i^{\alpha} \cdot [\sum_{j=1}^n L_j^{\alpha}]^{-1}, ~ \alpha \in \mathbb{R}$ and an optimal block coordinate step with step size $1/L_i$ is employed. Note that, when $\alpha = 0$, the probability $p_i$ is uniformly distributed. When $\alpha = 1$, it is proportional to $L_i$.  It is shown that such an RBCD method converges linearly for a strongly convex function $f$ and sublinearly for the convex case. Later, in~\cite{BeckTetruashvili2013}, the cyclic version of the BCD method was studied, namely, each iteration consists of performing a gradient projection step with respect to a certain block taken in a cyclic order.  Global sublinear convergence rate was established for convex problems and, when the objective function is strongly convex, a linear convergence rate can be proved.   
%by providing a partition of identity matrix 
%$$
%I_N = (U_1, \cdots, U_n) \in \mathbb{R}^{N \times N}, \quad U_i \in \mathbb{R}^{N \times n_i}, \sum_i n_i = N. 
%$$
%He updates with optimal coordinate direction and $1/L_i$ as step size, where $L_i$ is coordinates Lipschitz constant on subspace $\mathbb{R}^{n_i}$. The coordinates subspace is chosen with probability $p_i = L_i^{\alpha} \cdot [\sum_{j=1}^n L_j^{\alpha}]^{-1}, ~ \alpha \in \mathbb{R}$. When $\alpha = 0$, probability $p_i$ is uniformly distributed. When $\alpha = 1$, it is distributed proportional to subspace Lipschitz constant $L_i$. 
%
More recently, Wright ~\cite{Wright2015} studied a simple version of RCD that updates one coordinate at each time with uniformly chosen index. It was pointed out that when applying to linear system $Ax = b$ using least-sqaures formulation, such a RCD is exactly a randomized Kaczmarz method~\cite{strohmer2009randomized,leventhalRandomizedMethodsLinear2010}. Similarly, it was shown that RCD convergences sublinearly for convex functions and linearly for strongly convex functions. 
%Stochastic gradient method (SG) corresponds to RCD when applied to the dual formulation. Applications to machine learning problems, efficient accelerated implementations, parallel versions and convergence analysis are also provided. 
In~\cite{lu2017randomized}, Lu developed a randomized block proximal damped newton (RBPDN) method. For solving smooth convex minimization problem, RBPDN uses Newton's method in each block. Comparing with the Newton's method, the computational complexity of RBPDN is reduced since the Newton's step is performed locally on each block.  There is a trade-off between convergence rate and computational complexity.  If the dimension of the blocks is too small, i.e. $\mathcal O(1)$, the Hessian on each block might lose lots of information, which might lead to slow convergence. While if the block's dimension is large, for example, $N/2$, then the  computation of Hessian inverse on each block might still be expensive.  

Those existing RCD and RBCD methods can achieve acceleration comparing with standard gradient descent (GD) methods, especially for large-scale problems.  However, the convergence of RCD and RBCD becomes quite slow when the problem is ill-conditioned.  It is well-known that, preconditioning techniques can be used to improve the conditioning of an optimization problem, see~\cite{polyakIntroductionOptimizationOptimization1987,ortegaIterativeSolutionNonlinear2000}.  While preconditioning techniques can be motivated in different ways, one approach is to look at the problem~\eqref{eq:min} in $\mathcal{V}$ endowed by an inner product induced by the preconditioner.  Roughly speaking, assuming the preconditioner $A$ is symmetric and positive-definite, we consider the Lipschitz continuity and convexity using the $A$-inner product and $A$-norm.  A good preconditioner means that the condition number measured using $A$-norm is relatively small and therefore, the convergence of preconditioned GD (PGD) can be accelerated.  The price to pay is the cost of the action of $A^{-1}$, which might be prohibitive for large-size problem.  Moreover, it is also difficult to use the preconditioner in the RCD and RBCD methods due to the fact that the coordinate-wise decomposition is essentially based on the $l^2$-norm.  

%Fortunately, dividing by coordinates is not the only way to decompose a space.

%With a proper space decomposition, the local problem will have a small condition number so that preconditioned gradient descent method inside the subsapce is fast and stable. With the hierarchical structure of multilevel subspace decomposition, the condition number is also reduced and leads to faster convergence rate. 

One main idea of the proposed algorithm is to generalize the coordinate-wise decomposition to subspace decomposition which is more suitable for the $A$-norm.  This idea itself is not new.  For example, the well-known multigrid method~\cite{stuben2001review}, which is one of the most efficient methods for solving the elliptic problem, can be derived fom subspace correction methods based on a multilevel space decomposition~\cite{Xu:1992Iterative}. Its randomized version has been considered in~\cite{huRandomizedFaulttolerantMethod2019a}.  Recently, in~\cite{Chen;Hu;Wise:2018Convergence}, we have developed fast subspace descent (FASD) methods by borrowing the subspace decomposition idea of multigrid methods for solving the optimization problems.  In this paper, we provide a randomized version of FASD and abbreviated as RFASD. 

A key feature of FASD and RFASD is a subspace decomposition $\mathcal V = \mathcal V_1 + \mathcal V_2 + \cdots \mathcal V_J, $ with $\mathcal V_i \subset \mathcal V,$ $i = 1, \cdots, J$.  Note here we do not require the space decomposition to be a direct sum nor be orthogonal. Indeed, 
%relaxing the orthogonality requirement between the subspaces not only provides a generalization of the RCD and RBCD methods but also introduces 
the overlapping/redundancy between the subspaces is crucial to speed up the convergence if the space decomposition is stable in the $\| \cdot \|_A$ norm as follows,
 \begin{itemize}
	\item (SD) Stable decomposition: there exists a constant $C_{_A} > 0$, such that
\begin{equation}\label{eq:introSD}
	\forall v \in \mathcal{V}, \quad v = \sum _{i=1}^J v_i, \quad \text{ and }\quad \sum_{i=1}^J \|v_i\|_{A}^2 \leq C_{_A}\|v\|_{A}^2.
\end{equation}
\end{itemize}
With such a subspace decomposition, the proposed RFASD method is similar with the RBCD method. At each iteration, RFASD randomly chooses a subspace according to certain sampling distribution, computes a search direction in the subspace, and then update the iterator with an appropriate step size. 

%An outline of our RFASD method is as follows. Suppose there exists a space decomposition: $\mathcal V = \mathcal V_1 + \mathcal V_2 + \cdots \mathcal V_J, $ with $\mathcal V_i \subset \mathcal V,$ $i = 1, \cdots, J$. At iteration $k$, randomly  choose a subspace $\mathcal V_{i_k}$ according to some sampling distribution. 
%Then compute a subspace search direction $s_i = -A_i^{-1}R_i\nabla f(x^k)$, where $R_i$ is the restriction operator and $A_i$ is an SPD matrix defined on $\mathcal V_i$, and update subspace correction 
%$$
%x^{k+1} = x^k + \alpha_k s_{i_k},
%$$
%with an appropriate step size $\alpha_k$. The algorithm is very like randomized block coordinate descent methods. The only difference is the space decomposition $\mathcal V = \sum_{i} \mathcal V_i$ may not be a direct sum. Indeed overlapping of subspaces or equivalently redundancy is crucial to speed up the convergence. 

%If we can find a stable space decomposition in $\|\cdot\|_A$ norm:
% \begin{itemize}
%	\item (SD) Stable decomposition: there exists a constant $C_{_A} > 0$, such that
%	$$
%	\forall v \in \mathcal{V}, \quad v = \sum _{i=1}^J v_i, \quad \text{ and }\quad \sum_{i=1}^J \|v_i\|_{A}^2 \leq C_{_A}\|v\|_{A}^2.
%	$$
%\end{itemize}

Based on standard assumptions, we first prove a sufficient decay inequality. Then coupled with the standard upper bound of the optimality gap, we are able to show RFASD converges sublinearly for convex functions and linearly if the objective function is strongly convex.  More importantly, the convergence rate of RFASD depends on the condition number measured in $A$-norm and there is no need of inverting $A$ directly, only local solves on each subspace is sufficient.  Using strongly convex case as an example, we show that the convergence rate is 
\begin{equation*}
1 - \frac{1}{J}\frac{1}{C_{_A}}\frac{1}{\kappa_{_A}},
\end{equation*}
where $\kappa_{_A}$ is the condition number of $f$ measured in the $A$-norm, which could be much smaller than the condition number of $f$ measures in $l^2$-norm.  Here $J$ is the number of subspaces and $C_{_A}$ measures the stability of the space decomposition in $A$-norm; see \eqref{eq:introSD}.  Therefore, if we choose a proper preconditioner $A$ such that $\kappa_{_A} = \mathcal{O}(1)$ and a stable subspace decomposition such that $C_{_A} = \mathcal{O}(1)$,  after $J$ iterations, we have 
\begin{equation*}
\left( 1 - \frac{1}{J}\frac{1}{C_{_A}}\frac{1}{\kappa_{_A}} \right)^J \leq \exp(-\frac{1}{C_{_A}\kappa_{_A}}) = \exp(-\mathcal{O}(1)).
\end{equation*}
This indicates an exponential decay rate that is independent of the size of the optimization problem, which shows the potential of the proposed RFASD method for solving large-scale optimization problems.  In summary, based on a stable subspace decomposition, we can achieve the preconditioning effect by only solving smaller size problems on each subspace, which reduces the computational complexity.

The paper is organized as follows. In Section~\ref{sec:RFASD}, we set up the minimization problem $\min_{x \in \mathcal{V}} f(x)$ with proper assumptions on $f$ and $\nabla f$. Then we propose the RFASD algorithm. In Section~\ref{sec:convergence}, based on the stable decomposition assumption,  convergence analysis for convex functions and strongly convex function are derived.  In Section~\ref{sec:examples}, we give some examples and comparisons between several methods within the framework of RFASD.  Numerical experiments results are provided in Section~\ref{sec:numerics} to confirm the theories.  Finally, we provide some conclusions in Section~\ref{sec:conclusions}.

%GD, (Block) CD, FAS and Preconditioned RFASD can be viewed as examples in our framework. Comparisons with these methods are summarized in Section 4. Numerical experiments results are provided in Section 5. 
%In Section 6, we provide two accelerated schemes for RFASD. One is for the case $(\sum_{i=1}^J \mu_i s_i, v)_\mathcal V = \langle -\nabla f(x^k), v \rangle $ and the other is for the case $(\sum_{i=1}^J \mu_i s_i, v)_\mathcal V \neq \langle -\nabla f(x^k), v \rangle $, where the Catalyst Accelerated version of RFASD is developed. Convergence analyses and complexity are provided for both cases. 

%%%%%%%%%%%%%%%%%%%%%%%%%%%%%%%%%%%%%
\section{Fast Subspace Descent Methods} \label{sec:RFASD}
%%%%%%%%%%%%%%%%%%%%%%%%%%%%%%%%%%%%%
In this section, we introduce the basic setting of the optimization problem we consider as well as basic definitions, notation, and assumptions. Then we propose the fast subspace descent method based on proper subspace decomposition. 

\subsection{Problem Setting}
We consider the minimization problem \eqref{eq:min}. The Hilbert space $\mathcal V$ is a vector space equipped with an inner product 
$(\cdot, \cdot)_{A}$ and the norm induced is denoted by $
\| x \|_{A} = (x, x )_{A}^{1/2}$. Although our discussion might be valid in general Hilbert spaces, we restrict ourself to the finite dimensional space and without of loss generality we take $\mathcal V = \mathbb R^N$. In this case, the standard $l^2$ dot product in $\mathbb R^N$ corresponds to $A = I$ and is denoted by
\begin{equation}\label{def:dot_prod}
(x, y) = x \cdot y : = \sum_{i=1}^N x_i y_i\quad \forall x,y \in \mathcal{V}.
\end{equation}
The inner produce $(\cdot, \cdot)_A$ is induced by a given symmetric positive definite (SPD) matrix $A \in \mathbb{R}^{N \times N}$ and defined as follows,
\begin{equation}
(x, y)_A := (A x, y), \quad  \forall ~x,y \in \mathcal{V}. 
\end{equation}
Let $\mathcal V' := \mathcal{L} (\mathcal V, \mathbb R)$ be the linear space of all linear and continuous mappings $\mathcal V \rightarrow \mathbb R,$ which is called the dual space of $\mathcal V$. The dual norm w.r.t the $A$-norm is defined as: for $f\in \mathcal V'$
\begin{equation}\label{def:norm_Vp}
\| f \|_{\mathcal V' } = \sup_{\| x \|_{A} \leq 1 } \langle f, x \rangle,
\end{equation}
where $\langle \cdot, \cdot \rangle$ denotes the standard duality pair between $\mathcal{V}$ and $\mathcal{V}'$.  By Riesz representation theorem, $f$ can be also treat as a vector and, it is straightforward to verify that 
$$
\| f \|_{\mathcal V' } = \| f \|_{A^{-1}} := \langle A^{-1} f, f \rangle^{1/2}.
$$

%
%Define the dual norm on $\mathcal V'$: $h \in \mathcal V' \mapsto \|h \| \in \mathbb R^{+},$ by the formula 
%
%The duality pair $\langle \cdot, \cdot \rangle$ is defined as 
%\begin{equation}\label{def:dual_pair}
%\langle h, x \rangle := h(x), ~ \forall~x \in \mathcal V,~h \in \mathcal V'. 
%\end{equation}
%
%By Riesz representation theorem, for any $h \in \mathcal V'$ there exists one and only one element $x_h \in \mathcal V$ such that the representation formula 
%\begin{equation}\label{thm:riesz_rep}
%\dual{h,x} = (x, x_h), ~\forall~ x \in \mathcal V
%\end{equation}
%holds, and furthermore
%\begin{equation}
%\| h \|_{A^{-1}} = \|x_h \|_{\mathcal V}.
%\end{equation}

Next, we introduce a decomposition of the space $\mathcal{V}$, i.e., 
\begin{equation} \label{eqn:space-decomp}
\mathcal{V} = \mathcal{V}_1 + \mathcal{V}_2 + \cdots + \mathcal{V}_J, \quad \mathcal{V}_i \subset \mathcal{V}, \quad i = 1,\cdots, J.
\end{equation}
Again we emphasize that the space decomposition is not necessarily a direct sum nor be orthogonal. For each subspace $\mathcal V_i$, we assign an inner product induced by a symmetric and positive definite matrix $A_i: \mathcal V_i\to \mathcal V_i$. The product space $$\widetilde V = \mathcal V_1\times \mathcal V_2 \times \cdots \times \mathcal V_J$$ is assigned with the product topology: for $\tilde v = (v_1, v_2, \ldots, v_J)^{\intercal }\in \widetilde V$
$$
\| \tilde v \|_{\tilde A} := \left (\sum_{i=1}^J \| v_i \|_{A_i}^2\right )^{1/2}. 
$$
In matrix form, $\tilde A = {\rm diag}(A_1, A_2, \ldots, A_J)$ is a block diagonal matrix defined on $\widetilde V$. 

We shall make the following assumptions on the objective function:
\begin{itemize}
\item (LCi) The gradient of $f$ is Lipschitz continuous restricted in each subspace with Lipschitz constant $L_i$, i.e., 
\begin{equation*}
\| \nabla f(x+v_i) - \nabla f(x) \|_{A^{-1}} \leq L_{A,i} \| v_i \|_{A_i}, \quad \forall \, v_i \in \mathcal{V}_i.
\end{equation*}

\item (SC) $f$ is strongly convex with strong convexity constant $\mu \geq 0$, i.e.,
\begin{equation*}
\langle \nabla f(x) - \nabla f(y), x - y \rangle \geq \mu_{_A} \| x - y \|_{A}^2, \quad \forall \, x, \, y \in \mathcal{V}.
\end{equation*}

%\item (SD) Stable decomposition:
\end{itemize}

%When $\|\cdot\|_{A_i} = \|\cdot\|_{\mathcal V}$, obviously (LCi) holds with constant $L_i = L$ but hopefully smaller $L_i$ can be found for different subspaces and measured in different norms. 

%We introduce the following assumptions for the objective function $f(x):\mathcal{V} \mapsto \mathbb{R}$:
%\begin{itemize}
%\item (LC) The gradient of $f$ is Lipschitz continuous with Lipschitz constant $L$, i.e., 
%\begin{equation*}
%\| \nabla f(x) - \nabla f(y) \|_{A^{-1}} \leq L_{_A} \| x - y \|_{A}, \quad \forall \, x, \, y \in \mathcal{V}.
%\end{equation*}
%\end{itemize}

%For each subspace $\mathcal V_i$, we assign a SPD matrix $A_i$.
%The equivalent relation between $\bar L_{_A}_2$ norm and $A_i$ norm is described in following inequality. 
%\begin{itemize}
%\item (SE) In subspace $\mathcal V_i$, there exist constants $\lambda_i, \mu_i, i =1,\cdots, J$ such that 
%\begin{equation*}
%\mu_i ( v_i, v_i )_{\mathcal V} \leq ( A_i v_i, v_i ) \leq \lambda_i (v_i, v_i)_{\mathcal V}, ~ \forall v_i \in \mathcal V_i.
%\end{equation*}
%\end{itemize}

Let $I_i: \mathcal{V}_i \mapsto \mathcal{V}$ be the natural inclusion and let $R_i = I_i^{\intercal}: \mathcal V' \mapsto \mathcal V_i'$. In the terminology of multigrid methods, $I_i$ corresponds to the prolongation operator and $R_i$ is the restriction.

\subsection{Randomized Fast Subspace Descent Methods}
Now, we propose the randomized fast subspace descent (RFASD) algorithm. 

\begin{algorithm}[H]
\caption{Randomized Fast Subspace Descent Method}\label{alg:RFASD}
\begin{algorithmic}[1]
\STATE Choose $x^0$ and $k \gets 0$	
\FOR{$k = 0,1, \cdots $}
\STATE Choose an index of subspace $i_k$ from $\{1,\cdots, J\}$ with the sampling probability:
\begin{equation}\label{eq:nonuniform}
i_k = i \quad \text{ with probability } p_i = \frac{1}{J} \frac{L_{A,i}}{\bar L_{_A}}, \quad i = 1,2, \ldots, J,
\end{equation}
where $ \displaystyle \bar L_{_A} = \frac{1}{J} \sum_{i=1}^J L_{A,i}$ is the averaged Lipschitz constant.

\smallskip
\STATE Compute subspace search direction $\displaystyle s_{i_k} = - I_{i_k} A_{i_k}^{-1}R_{i_k}\nabla f(x^k)$
%$\displaystyle s_{i_k} = - A_{i_k}^{-1}R_{i_k}^{\intercal}\nabla f(x^k)$. 
%Compute $\tilde s = - \tilde A^{-1}\Pi^{\intercal}\nabla f(x^k)$ a subspace search direction $s_{i_k} \in \mathcal{V}_{i_k}$ based on $x^k$.

\smallskip
\STATE Choose the step size $\displaystyle \alpha_k = \frac{1}{L_{A,i_k}}$ and update by the subspace correction:
$$
x^{k+1}: = x^k + \alpha_k s_{i_k}. 
$$
\ENDFOR
\end{algorithmic}
\end{algorithm}

%There are two candidates for distribution $p$ used in the sampling step 3. 
%\begin{itemize}
%\item \textbf{Uniform distribution}. The probability of choosing the $i$-th subspace is 
%\begin{equation}\label{U_dist}
%p_i = \frac{1}{J}, \quad i = 1,2, \ldots, J.
%\end{equation}
%
%\item \textbf{Non-uniform distribution proportional to subspace Lipschitz numbers}. The probability of choosing the $i$-th subspace is 
%\begin{equation}\label{eq:nonuniform}
%p_i = \frac{1}{J} \frac{L_i }{\bar L_{_A}}, \quad i = 1,2, \ldots, J,
%\end{equation}
%where $\bar L_{_A} = \sum_{i=1}^J L_i /J$ is the average of $L_i$s.
%\end{itemize}

%Obviously \eqref{eq:nonuniform} becomes \eqref{U_dist} when $L_i = L$. In general, 
The non-uniform sampling distribution and the step size $\alpha_k = 1/L_{A,i_k}$ requires a priori knowledge of $L_{A,i}$. A conservative plan is to use one upper bound for all subspaces. For example, when the gradient of $f$ is Lipschitz continuous with Lipschitz constant $L_{_A}$, i.e., 
\begin{equation*}
\| \nabla f(x) - \nabla f(y) \|_{A^{-1}} \leq L_{_A} \| x - y \|_{A}, \quad \forall \, x, \, y \in \mathcal{V}.
\end{equation*}
We can set $L_{A,i} = L_{_A}$ for all $i$ and consequently we pick the subspace uniformly and use a uniform step size $1/L_{_A}$. 

There is a balance between the number of subspaces and the complexity of the subspace solvers. For example, we can choose $J=1$ and thus $C_{_A}=1$. But then we need to compute $A^{-1}$ which may cost $\mathcal O(N^p)$ for $p > 1$ which is not practical for large-size problems (e.g., using the standard Gauss elimination to compute $A^{-1}$ leads to $p=3$). On the other extreme, we can chose a multilevel decomposition with  $J = \mathcal O(N\log N)$ and $n_i = \mathcal O(1)$. Then the cost to compute $A_{i}^{-1}$ is $\mathcal O(1)$.  %Therefore, in this work, we use the cost of one cyclic loop over all the subspace $\sum_{i=1}^J \mathcal O(n_i^p)$ as a measurement for the computational complexity.

%\LC{Therefore the number of subspaces alone is meaningless. A better measurement for the complexity is that for one cyclic loop over all subspaces: $\sum_{i=1}^J \mathcal O(n_i^p)$.} \mnote{ what is a good quantity?}

One important question is what is a good choice of an $A$-norm? Any good preconditioner for the objective function is a candidate. For example, when $\nabla^2 f$ exists, $A = \nabla^2 f(x^k)$ or its approximation is a good choice since this inherits advantages of the Newton's method or quasi-Newton methods.   

Another important question is how to get a stable decomposition based on a given SPD matrix $A$? There is no satisfactory and universal answer to this question. One can always start from a block coordinate decomposition. When $A = \nabla^2 f(x^k)$, this leads to the block Newton method considered in~\cite{lu2017randomized}. We can then merge small blocks to form a larger one in a multilevel fashion and algebraic multigrid methods~\cite{stuben2001review} can be used in this process to provide a coarsening of the graph defined by the Hessian. In general, efficient and effective space decomposition will be problem dependent. We shall provide an example later on.

\subsection{Randomized Full Approximation Storage Scheme} 
The full approximation storage (FAS) scheme, in the deterministic setting, was proposed in~\cite{Brandt1977} and is a multigrid method for solving nonlinear equations. Several FAS-like algorithms for solving optimization problems have been considered in the literature \cite{Gelman;Mandel:1990multilevel,Lewis;Nash:2005problems,Nash.S2000,Gratton;Sartenaer;Toint:2008Recursive}, including those that are line search-based recursive or trust region-based recursive algorithms

Based on RFASD, we shall propose a randomized FAS (RFAS) algorithm.  We first recall FAS in the optimization setting as discussed in~\cite{Chen;Hu;Wise:2018Convergence}.  Given a space decomposition $
\mathcal{V} = \mathcal{V}_1 + \mathcal{V}_2 + \cdots + \mathcal{V}_J, \ \mathcal{V}_i \subset \mathcal{V}, \ i = 1,\cdots, J.$ Let $Q_i: \mathcal{V} \mapsto \mathcal{V}_i$ be a projection operator and, ideally, $Q_i v$ should provide a good approximation of $v \in \mathcal{V}$ in the subspace $\mathcal{V}_i$. In addition,  $f_i: \mathcal{V}_i \mapsto \mathbb{R}$ is a local objective function. $f_i$ can be the original $f$. Then it coincides with the multilevel optimization methods established by Tai and Xu~\cite{Tai;Xu:2001Global}; see Remark 4.2 in~\cite{Chen;Hu;Wise:2018Convergence}. 
%Here, we have freedom to choose $f_i$ on subspace $\mathcal{V}_i$ as long as the following conditions on $f_i$ hold.
%\begin{itemize}
%	\item (LCfi) Lipschitz continuity of the first order dervative: for all $x,y \in \mathcal{V}_i$
%	\begin{equation*}
%	\| \nabla f_i(x) - \nabla f_i(y) \|_{A^{-1}} \leq L_i \| x - y \|_{\mathcal{V}}.
%	\end{equation*}
%	
%	\item (SCfi) Strong convexity: for all $x,y \in \mathcal{V}_i$
%	\begin{equation*}
%	\langle \nabla f_i(x) - \nabla f_i(y), x-y \rangle \geq \mu_i \| x - y \|^2_{\mathcal{V}}. 
%	\end{equation*}
%\end{itemize} 
Given the current approximation $x^k$, in FAS, the search direction $s_i$, $i=1,2,\cdots, J$, is computed by the following steps:

\begin{algorithm}[H]
	\caption{Compute search direction $s_i$ for FAS} \label{alg:FAS-search}
\textbf{Input:} current approximation $x^k$ and index $i$

\textbf{Output:} search direction $s_i$
\begin{algorithmic}[1]
 \STATE Compute the so-called $\tau$-correction: $\tau_i = \nabla f_i(Q_i x^k) - R_i \nabla f(x^k)$
\smallskip
 \STATE Solve the $\tau$-perturbed problem: $
 \langle \nabla f_i(\eta_i), v_i \rangle = \langle \tau_i, v_i \rangle, \ \forall \, v_i \in \mathcal{V}_i.$
\smallskip
 \STATE Compute the search direction: $s_{i}:= \eta_i - Q_i x^k$. 
\end{algorithmic}
\end{algorithm}

Replacing Step 4 in RFASD (Algorithm~\ref{alg:RFASD}), we obtain the RFAS as shown in Algorithm~\ref{alg:RFAS}.
\begin{algorithm}[H]
	\caption{Randomized Full Approximation Storage Scheme} \label{alg:RFAS}
	\begin{algorithmic}[1]
		\STATE Choose $x^0$ and $k \gets 0$	
		\FOR{$k = 0,1, \cdots $}
		\STATE Choose an index $i_k$ from $\{1,\cdots, J\}$ with the sampling probability given as~\eqref{eq:nonuniform}.
		\smallskip
		\STATE Compute subspace search direction $s_{i_k}$ using Algorithm~\ref{alg:FAS-search} with inputs $x^k$ and $i_k$. 
		
		\smallskip
		\STATE Update by the subspace correction:
		$
		\displaystyle
		x^{k+1}: = x^k + \frac{1}{L_{A, i_k}} s_{i_k}. 
		$
		\ENDFOR
	\end{algorithmic}
\end{algorithm}

Next we show that if we choose $f_i$ in certain way, RFAS becomes a special case of RFASD.  Given the SPD matrices $A_i$, which induce the inner products on subspaces $\mathcal{V}_i$, $i= 1,2,\cdots, J$, we define the following quadratic local objective functions,
\begin{equation*}
f_i(w) = \frac{1}{2} \| w \|^2_{A_i}, \quad \forall \, w\in \mathcal V_i, \quad i = 1, 2, \cdots, J.
\end{equation*}
From Algortihm~\ref{alg:FAS-search}, it is easy to see that $s_i = -I_i A_i^{-1}R_i \nabla f(x^k)$.  Therefore, in this case, RFAS agrees with RFASD. Note that in this setting $A_i$ may not be the Galerkin projection of $A$, i.e. $A_i \neq R_i AI_i$, which is different from RPSD.  Nevertheless, the convergence analysis (Theorem~\ref{thm:str-con-linear} and \ref{thm:conv-sublinear}) can be still applied but the constants $L_{A,i}$ and $C_{_A}$ depends on the choices of $A_i$.  

%In the example of block Newton's method, $A_i$ can be an approximation of the block Newton's matrix. The constants $L_i$ and $C_{_A}$ are thus changed as the norm is different for different $x^k$. 

Consider a slighly more general case that the local objective function $f_i$ is in $\mathcal C^2$, then by mean value theorem, from Algorithm~\ref{alg:FAS-search}, we can write the equation of $s_i$ as
$$
\langle \nabla^2f_i (\xi_i) s_i, v_i \rangle = - \langle R_i \nabla f(x^k), v_i \rangle, \quad \forall \, v_i \in \mathcal{V}_i,
$$ 
which implies $A_i = R_i \nabla^2f_i (\xi_i) I_i$ and, again, RFAS is a special case of RFASD.  Of course, this choice of $A_i$ is impractical because we do not know $\xi_i$ in general.  One practical choice might be $A_i = R_i\nabla^2f (x^k) I_i$, which is the Galerkin projection of the Hessian matrix of original $f$. In this case, RFAS becomes block Newton's method. The constants $L_{A,i}$ and $C_{_A}$ are thus changed as $A_i$ depends on $x^k$ and is different at each iteration. 

\section{Convergence Analysis} \label{sec:convergence}
In this section, we shall present a convergence analysis of RFASD.  We first discuss the stability of a space decomposition and then prove the crucial sufficient decay property. Then we obtain a linear or sublinear convergence rate by different upper bounds of the optimality gap. 

\subsection{Stable decomposition}
We first introduce the mapping $\Pi: \widetilde V\to \mathcal V$ as follow, % for $\tilde v = (v_1, v_2, \ldots, v_J)^{\intercal }\in \widetilde V$
$$
\Pi \tilde v = \sum_{i=1}^J v_i, \quad \text{for} \ \tilde v = (v_1, v_2, \ldots, v_J)^{\intercal }\in \widetilde V.
$$
We can write $\Pi = (I_1, I_2, \ldots, I_J)$ and $\Pi^{\intercal } = (R_1, R_2, \ldots, R_J)^{\intercal}$ in terms of prolongation and restriction operators. 

The space decomposition $\mathcal V = \sum_i \mathcal V_i$ implies that $\Pi$ is surjective. Since for finite dimensional spaces, all norms are equivalent, $\Pi$ is a linear and continuous operator and, thus, by the open mapping theorem, there exists a continuous right inverse of $\Pi$. Namely there exists a constant $C_{_A} > 0$, such that for any $v \in \mathcal{V}$,  there exists a decomposition $v = \sum _{i=1}^J v_i, $ with $v_i\in \mathcal V_i$ for $i=1,\ldots, J$, and
\begin{equation}\label{eq:CA}
\sum_{i=1}^J \|v_i\|_{A_i}^2 \leq C_{_A}\|v\|_A^2.
\end{equation}
The constant $C_{_A}$ measures the stability of the space decomposition. When the decomposition is orthogonal, $C_{_A} = 1$. As the adjoint, the operator $\Pi^{\intercal}: \mathcal V'\to \widetilde V'$ is injective and bounded below. The following result is essentially from the fact that $\Pi$ and $\Pi^{\intercal}$ has the same minimum singular value $1/C_{_A}$.

\begin{lemma}
For a given $g\in \mathcal V'$, let $g_i = R_i g, s_i = - A_i^{-1} g_i$ for $i=1,2,\ldots, J$. Then 
\begin{equation}\label{eq:gs}
-\langle g, \sum_{i=1}^J s_i \rangle = \sum_{i=1}^J\| g_i \|_{A_i^{-1}} = \sum_{i=1}^J \|s_i\|_{A_i}^2
\end{equation}
and
\begin{equation}\label{eq:gstable}
\| g\|_{A^{-1}}^2 \leq C_{_A}\sum_{i=1}^J \|s_i\|_{A_i}^2 = \sum_{i=1}^J \|s_i\|_{A_i}^2,
\end{equation}
where $C_{_A}$ is the constant in \eqref{eq:CA}.
\end{lemma}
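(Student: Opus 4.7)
My plan is to prove (\ref{eq:gs}) by direct algebra from the defining relations $g_i = R_i g$ and $s_i = -A_i^{-1} g_i$, and then derive (\ref{eq:gstable}) from the variational characterization of the dual norm combined with the stable decomposition (\ref{eq:CA}). I read the middle expression in (\ref{eq:gs}) as $\sum_{i=1}^J \|g_i\|_{A_i^{-1}}^2$, with the square evidently suppressed by a typo, since that is what the outer two expressions equal.

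For (\ref{eq:gs}), I would expand $-\langle g, \sum_{i=1}^J I_i s_i\rangle$, use the adjoint relation $I_i^{\intercal} = R_i$ to rewrite each term as $\langle -s_i, g_i\rangle$, and then substitute $-s_i = A_i^{-1} g_i$ to obtain $\langle A_i^{-1} g_i, g_i\rangle = \|g_i\|_{A_i^{-1}}^2$. The equality $\|g_i\|_{A_i^{-1}}^2 = \|s_i\|_{A_i}^2$ is then immediate from $\langle A_i s_i, s_i\rangle = \langle A_i (A_i^{-1} g_i), A_i^{-1} g_i\rangle = \langle g_i, A_i^{-1} g_i\rangle$.

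For (\ref{eq:gstable}), I would use $\|g\|_{A^{-1}} = \sup_{v\neq 0}\langle g, v\rangle / \|v\|_A$. Given any $v \in \mathcal V$, invoke (\ref{eq:CA}) to choose a decomposition $v = \sum_i v_i$ with $\sum_i \|v_i\|_{A_i}^2 \le C_{_A} \|v\|_A^2$. Split $\langle g, v\rangle = \sum_i \langle R_i g, v_i\rangle = \sum_i \langle g_i, v_i\rangle$, then apply Cauchy-Schwarz in each subspace followed by Cauchy-Schwarz on the outer sum to get
$$\langle g, v\rangle \le \Bigl(\sum_{i=1}^J \|g_i\|_{A_i^{-1}}^2\Bigr)^{1/2}\Bigl(\sum_{i=1}^J \|v_i\|_{A_i}^2\Bigr)^{1/2} \le \sqrt{C_{_A}}\,\|v\|_A\,\Bigl(\sum_{i=1}^J \|s_i\|_{A_i}^2\Bigr)^{1/2},$$
using the identity from (\ref{eq:gs}) in the last step. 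Dividing by $\|v\|_A$, taking the supremum over $v$, and squaring yields (\ref{eq:gstable}).

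The only real obstacle is bookkeeping: one must be careful that $s_i \in \mathcal V_i$ is implicitly embedded into $\mathcal V$ via $I_i$ whenever it is paired with $g \in \mathcal V'$, so that the adjoint identity $I_i^{\intercal} = R_i$ is applied correctly. There is no analytic difficulty — both assertions reduce to the definitions together with a single Cauchy-Schwarz argument that exploits the $A$-stability constant $C_{_A}$.
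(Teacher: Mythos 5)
Your proposal is correct and follows essentially the same route as the paper: the first identity by direct substitution using $I_i^{\intercal}=R_i$, and the second via the dual-norm supremum, a stable decomposition of the test vector, and a double Cauchy--Schwarz estimate. You also correctly spot the missing square in the middle term of \eqref{eq:gs}, which is indeed a typo.
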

\begin{proof}
The first identity is an easy consequence of definitions as: for $i=1,2,\ldots, J$
 $$
-\langle g, s_i \rangle = \langle g, I_i A_i^{-1} R_i g \rangle =  \| g_i \|_{A_i^{-1}} = \| A_i^{-1} g_i \|_{A_i}^2 = \|s_i\|_{A_i}^2.
 $$
%Equivalently, for $g\in \mathcal V'$, $\tilde s = - \tilde A^{-1} \Pi^{\intercal }g \in \widetilde V$ is the Reisz representation of $- \Pi^{\intercal }g\in \widetilde V'$ in $\widetilde V$ and, by Reisz representation theorem, we have
%$$
% \| \tilde s\|_{\tilde A}^2  = \| - \Pi^{\intercal }g \| _{\tilde A^{-1}}^2, 
%$$
%which is the identity \eqref{eq:gs}
 
We now prove \eqref{eq:gstable}.  
For a given $g\in \mathcal V'$, let $g_i = R_i g \in \mathcal V_i'$. 
For any $w\in \mathcal V$, we chose a stable decomposition $w = \sum_{i=1}^J w_i, w_i \in \mathcal V_i$. Then
\begin{align*}
 \langle g, w \rangle  &= \sum_{i=1}^J \langle g, w_i \rangle  = \sum_{i=1}^J \langle g_i, w_i \rangle \leq \left (\sum_{i=1}^J \|g_i\|_{A_i^{-1}}^2 \right )^{1/2} \left (\sum_{i=1}^J \|w_i \|_{A_i}^2 \right )^{1/2}\\
						  & \leq C_{_A}^{1/2} \left (\sum_{i=1}^J \| g_i \|_{A_i^{-1}}^2 \right )^{1/2} \| w\|_{A}.
\end{align*} 
Thus,
$$
 \| g \|_{A^{-1}}^2  = \left ( \sup_{w\in \mathcal V} \frac{\langle g, w \rangle}{\|w \|_{A}} \right )^2 \leq C_{_A} \sum_{i=1}^J \| g_i \|_{A_i^{-1}}^2 = \sum_{i=1}^J \|s_i\|_{A_i}^2,
$$
which completes the proof.
\end{proof}

%\subsection{Assumptions}
%Therefore, we make the following assumptions. For a given $g\in \mathcal V'$, we can find $s_i = s_i(g)\in \mathcal V_i$ for $i=1,2, \ldots, J$ satisfying the following assumptions:
%\begin{itemize}
%	\item (DD) Descent direction: there exists a constant $1 >0$ such that
%	\begin{equation*}
%	 \langle - g, \sum_{i=1}^{J} s_i \rangle \geq 1 \left( \sum_{i=1}^J  \|  s_i \|_{A_i}^2 \right).
%	\end{equation*}
%	\item (AP) Approximate gradient \mnote{ stable representation}: there exists a constant $\gamma$ such that
%	\begin{equation*}
%	\| g \|_{A^{-1}} \leq \sqrt{\gamma} \left(  \sum_{i=1}^J \| s_i \|_{A_i}^2 \right)^{\frac{1}{2}}
%	\end{equation*}
%\end{itemize}
%The ratio $\gamma/1 \geq 1$ measures the stability of the space decomposition and can be thought of as the condition number of the space decomposition.

\subsection{Sufficient decay}
We shall prove a sufficient decay property for the function value. Note that we do not assume $f$ is convex but only Lipschitz continuous in each subspace.

\begin{lemma}
 Suppose the objective function $f$ and space decomposition $\mathcal V = \sum_{i=1}^J \mathcal V_i$ satisfy (LCi). Let $\{ x^k\}$ be the sequence generated by Algorithm \ref{alg:RFASD}. Then for all $k>0$, we have
\begin{equation}\label{eq:decay}
\mathbb E[f(x^{k+1})] - f(x^k) \leq -\frac{1}{2\bar L_{_A} C_{_A} J}\| \nabla f(x^k) \|_{A^{-1}}^2.
\end{equation}
\end{lemma}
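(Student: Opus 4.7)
The plan is to chain together a per-subspace descent lemma, the particular probability weights and step size, and the stability bound \eqref{eq:gstable} proved in the previous lemma.

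First I would establish a subspace descent lemma from (LCi). The Lipschitz condition along the subspace $\mathcal V_i$ is exactly what is needed to invoke the standard one-dimensional argument $f(x+v_i) = f(x) + \int_0^1 \langle \nabla f(x + t v_i), v_i\rangle\,\dd t$, splitting off $\langle \nabla f(x), v_i\rangle$ and bounding the remainder by Cauchy--Schwarz in the $A^{-1}$/$A_i$ duality. This yields
\begin{equation*}
f(x + v_i) \leq f(x) + \langle \nabla f(x), v_i\rangle + \frac{L_{A,i}}{2}\|v_i\|_{A_i}^2, \qquad \forall\, v_i \in \mathcal V_i.
\end{equation*}
This step is routine but is the linchpin: it is where the (LCi) hypothesis gets converted into a usable quadratic majorant, and the asymmetric pair of norms ($A^{-1}$ on the dual side, $A_i$ on the primal side) must line up cleanly. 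I expect this to be the part that needs the most care in writing.

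Next I would plug in the specific direction $s_{i_k} = -I_{i_k}A_{i_k}^{-1}R_{i_k}\nabla f(x^k)$ with step $\alpha_k = 1/L_{A,i_k}$. Writing $g_i = R_i\nabla f(x^k)$, the identities from \eqref{eq:gs} give $\langle \nabla f(x^k), s_{i_k}\rangle = -\|g_{i_k}\|_{A_{i_k}^{-1}}^2$ and $\|s_{i_k}\|_{A_{i_k}}^2 = \|g_{i_k}\|_{A_{i_k}^{-1}}^2$. Substituting into the descent lemma and combining the linear and quadratic terms produces
\begin{equation*}
f(x^{k+1}) - f(x^k) \leq -\frac{1}{2L_{A,i_k}}\|g_{i_k}\|_{A_{i_k}^{-1}}^2.
\end{equation*}

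Then I would take conditional expectation with respect to the random index $i_k$ using the sampling distribution \eqref{eq:nonuniform}. The factor $L_{A,i_k}$ in the denominator cancels against the numerator $L_{A,i}$ of $p_i$, giving
\begin{equation*}
\mathbb E[f(x^{k+1}) \mid x^k] - f(x^k) \leq -\sum_{i=1}^J \frac{1}{J}\frac{L_{A,i}}{\bar L_{_A}}\cdot\frac{1}{2L_{A,i}}\|g_i\|_{A_i^{-1}}^2 = -\frac{1}{2J\bar L_{_A}}\sum_{i=1}^J \|g_i\|_{A_i^{-1}}^2.
\end{equation*}
Finally I would apply \eqref{eq:gstable} to the gradient $g = \nabla f(x^k)$, which yields $\sum_{i=1}^J\|g_i\|_{A_i^{-1}}^2 \geq C_{_A}^{-1}\|\nabla f(x^k)\|_{A^{-1}}^2$, producing exactly \eqref{eq:decay} after taking total expectation. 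The entire argument hinges on the cancellation made possible by the non-uniform probability, together with the stability constant $C_{_A}$ appearing in the right place so that the $A^{-1}$-norm of the full gradient (rather than just the sum of subspace dual norms) controls the decay.
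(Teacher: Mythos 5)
Your proposal is correct and follows essentially the same route as the paper: the quadratic majorant from (LCi), the step size $1/L_{A,i_k}$, the identities of \eqref{eq:gs}, the cancellation of $L_{A,i}$ against the sampling weights, and finally the stability bound \eqref{eq:gstable}. The only cosmetic difference is that you combine the linear and quadratic terms per realization of $i_k$ before taking the conditional expectation, while the paper takes the expectation of both terms first and then applies \eqref{eq:gs} to the sum; the two orderings are equivalent.
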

\begin{proof}
 By the Lipschitz continuity (LCi) and the choice of the step size, we have
\begin{align*}
f(x^{k+1}) &\leq f(x^k) + \alpha_k \langle \nabla f(x^k), s_{i_k}\rangle  + \frac{L_{A,i_k}}{2} \alpha_k^2  \| s_{i_k} \|_{A_i}^2\\
&= f(x^k) + \frac{1}{L_{A,i_k}} \langle \nabla f(x^k), s_{i_k}\rangle  + \frac{1}{2 L_{A,i_k}}  \| s_{i_k} \|_{A_i}^2.
\end{align*}
%Choose $\alpha = \frac{1 \mu_{i_k}}{L}$, we have 
%$$
%f(x^{k+1}) \leq f(x^k) - \frac{1 \mu_{i_k}}{L} \langle - \nabla f(x^k), s_{i_k} \rangle + \frac{1^2 \mu_{i_k}}{2 L} \| s_{i_k} \|^2_{A_{i_k}}. 
%$$
Take expectation of $i_k$ conditioned by $x^k$ with probability $p_i = \frac{1}{J}L_{A,i}/ \bar L_{_A},\,  j = 1, \cdots, J.$
\begin{align*}
\mathbb{E}\left [ f(x^{k+1}) \right) - f(x^k) & \leq \frac{1}{J\bar L_{_A}} \langle \nabla f(x^k), \sum_{i=1}^J s_i \rangle + \frac{1 }{2J \bar L_{_A}} \sum_{i=1}^J \|s_i\|_{A_i}^2  \\
 & \leq -  \frac{1 }{2J \bar L_{_A}} \sum_{i=1}^J \|s_i\|^2_{A_i} \quad \text{ by \eqref{eq:gs}} \\
& \leq -\frac{1}{2 J \bar L_{_A}C_{_A}} \| \nabla f(x^k) \|_{A^{-1}}^2 \quad  \text{ by \eqref{eq:gstable}}. 
\end{align*}
\end{proof}
As we will show in the next two subsections, based on the above sufficient decay property~\eqref{eq:decay}, together with a proper upper bound of the optimality gap, linear or sub-linear convergent rate can be obtained for the strongly convex and convex case, respectively.

\subsection{Linear convergence for strongly convex functions}
To derive the linear convergence for the strongly convex case, we shall use the following upper bound of the optimality gap. 
	\begin{lemma}[Theorem 2.1.10 in~\cite{nesterov2013introductory}]
	\label{lm:dE-energy-V}
Suppose that $f$ satisfies assumption (SC) with constant $\mu_{_A}>0$ and $x^* \in \mathcal{V}$ is the minimizer of $f$; then for all $x \in \mathcal{V}$, 
	\begin{equation}
	\label{eq:upper} 
f(x) - f(x^*)  \leq \frac{1}{2\mu_{_A}} \| \nabla f(x) \|_{A^{-1}}^2.
	\end{equation}
	\end{lemma}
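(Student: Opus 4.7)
The plan is to invoke strong convexity once and complete the square in the $A$-norm. By assumption (SC) together with standard arguments (integrating the monotonicity of $\nabla f$ along the segment from $x$ to $y$), for every pair $x, y \in \mathcal V$ we have
\begin{equation*}
f(y) \geq f(x) + \langle \nabla f(x), y - x \rangle + \frac{\mu_{_A}}{2} \|y - x\|_A^2.
\end{equation*}
For fixed $x$, the right-hand side is a strictly convex quadratic in $y$. I would minimize it exactly: differentiating in $y$ and using that $\tfrac{\partial}{\partial y} \tfrac{1}{2}\|y-x\|_A^2 = A(y-x)$, the stationary point is $y_\star = x - \mu_{_A}^{-1} A^{-1} \nabla f(x)$. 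Substituting back and using the identity $\langle \nabla f(x), A^{-1} \nabla f(x) \rangle = \|\nabla f(x)\|_{A^{-1}}^2$ reduces the minimum value to $f(x) - \tfrac{1}{2\mu_{_A}}\|\nabla f(x)\|_{A^{-1}}^2$.

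I would then take the infimum over $y \in \mathcal V$ on both sides of the strong convexity inequality. The left-hand side becomes $f(x^*)$ since $x^*$ is the global minimizer, while the right-hand side is what I just computed, so
\begin{equation*}
f(x^*) \geq f(x) - \frac{1}{2\mu_{_A}}\|\nabla f(x)\|_{A^{-1}}^2,
\end{equation*}
and rearranging yields \eqref{eq:upper}.

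The only step that needs any care is the quadratic minimization performed in the non-standard $A$-inner product: the gradient $\nabla f(x)$ naturally lives in the dual space $\mathcal V'$, so the stationary correction involves $A^{-1} \nabla f(x)$ rather than $\nabla f(x)$ itself, and one must recognize $\|A^{-1}\nabla f(x)\|_A^2 = \|\nabla f(x)\|_{A^{-1}}^2$ to get the dual norm to appear on the right-hand side. Beyond that bookkeeping the argument is a direct substitution, so I do not anticipate any real obstacle; the statement is essentially Nesterov's Theorem 2.1.10 transcribed into the preconditioned $A$-norm used throughout the paper.
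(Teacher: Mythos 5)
Your proof is correct. The paper offers no proof of its own here---it simply cites Nesterov's Theorem 2.1.10---and your argument is exactly that standard one (the quadratic lower bound $f(y) \geq f(x) + \langle \nabla f(x), y-x\rangle + \tfrac{\mu_{_A}}{2}\|y-x\|_A^2$ minimized exactly over $y$) transcribed into the $A$-inner product, with the dual-norm identity $\|A^{-1}\nabla f(x)\|_A^2 = \|\nabla f(x)\|_{A^{-1}}^2$ handled correctly.
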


Now we are ready to show the linear convergence of RFASD when the objective function $f$ is strongly convex and the result is summarized in the following theorem. 
\begin{theorem}\label{thm:str-con-linear}
 Suppose the objective function and space decomposition satisfy (LCi) and (SC) with $\mu_{_A} > 0$. Let $\{ x^k\}$ be the sequence generated by Algorithm \ref{alg:RFASD}. Then for all $k>0$, we have the linear contraction
\begin{equation}\label{eq:str-conv-linear}
\mathbb{E}\left [ f(x^{k+1}) \right ] - f(x^*) \leq \left( 1 - \frac{1}{J}\frac{\mu_{_A}}{\bar L_{_A}}\frac{1}{C_{_A}} \right) \left( \mathbb{E} \left[ f(x^k) \right] - f(x^*) \right).
\end{equation}
\end{theorem}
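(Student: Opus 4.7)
The plan is to combine the two ingredients already assembled in the previous subsections: the per-step sufficient decay \eqref{eq:decay}, which controls $\mathbb{E}[f(x^{k+1})] - f(x^k)$ from above by a negative multiple of $\|\nabla f(x^k)\|_{A^{-1}}^2$, and the Polyak--Łojasiewicz-type bound from Lemma~\ref{lm:dE-energy-V}, which controls the optimality gap $f(x^k) - f(x^*)$ from above by a positive multiple of the same quantity $\|\nabla f(x^k)\|_{A^{-1}}^2$. Chaining these two inequalities should produce a one-step linear contraction in the objective gap, and then the tower property of conditional expectation will deliver \eqref{eq:str-conv-linear}.

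More concretely, I would first fix an iterate $x^k$ (treating it as deterministic) and apply the decay inequality with the expectation taken over the random index $i_k$ conditioned on $x^k$, obtaining
\begin{equation*}
\mathbb{E}\bigl[f(x^{k+1}) \mid x^k\bigr] - f(x^k) \;\leq\; -\,\frac{1}{2\bar L_{_A} C_{_A} J}\,\|\nabla f(x^k)\|_{A^{-1}}^2.
\end{equation*}
Next I would invoke Lemma~\ref{lm:dE-energy-V} in the rearranged form $\|\nabla f(x^k)\|_{A^{-1}}^2 \geq 2\mu_{_A}\bigl(f(x^k) - f(x^*)\bigr)$, substitute it into the right-hand side above, and subtract $f(x^*)$ from both sides. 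This yields
\begin{equation*}
\mathbb{E}\bigl[f(x^{k+1}) \mid x^k\bigr] - f(x^*) \;\leq\; \left(1 - \frac{1}{J}\,\frac{\mu_{_A}}{\bar L_{_A}}\,\frac{1}{C_{_A}}\right)\bigl(f(x^k) - f(x^*)\bigr).
\end{equation*}
Finally, taking total expectation over the randomness of $x^k$ (i.e., over $i_0, \ldots, i_{k-1}$) and using the tower property $\mathbb{E}[\mathbb{E}[\,\cdot \mid x^k]] = \mathbb{E}[\,\cdot\,]$ recovers \eqref{eq:str-conv-linear}.

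I do not anticipate any serious technical obstacle: both of the main inputs are already proved in the excerpt, and the remaining work is a direct algebraic substitution followed by a standard conditioning argument. The only small points worth being careful about are (i) verifying that the contraction factor is nonnegative, which follows because $\mu_{_A} \leq \bar L_{_A}$ and $C_{_A} \geq 1$ together force $\mu_{_A}/(J\bar L_{_A} C_{_A}) \leq 1$, and (ii) making the measurability/conditioning step explicit so that it is clear why the sufficient decay inequality, which is stated as an unconditional expectation over $i_k$ at a fixed $x^k$, can be legitimately inserted inside the outer expectation over the past iterates.
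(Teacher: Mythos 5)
Your proposal is correct and follows exactly the paper's argument: combine the sufficient decay inequality \eqref{eq:decay} with the bound \eqref{eq:upper} from Lemma~\ref{lm:dE-energy-V}, rearrange, and take total expectation over the past iterates. The only difference is that you spell out the conditioning/tower-property step more explicitly than the paper does, which is a harmless (indeed welcome) elaboration.
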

\begin{proof}
The left hand side of \eqref{eq:decay}  can be rewritten as $\mathbb E[f(x^{k+1})] - f(x^*)- ( f(x^k) - f(x^*))$. Combining \eqref{eq:decay} and \eqref{eq:upper}, rearranging the terms, and taking expectation with respect to $x^k$,  we get the desired result.
%\begin{equation*}
%\mathbb{E}\left [ f(x^{k+1}) \right ] - f^* \leq \left( 1 - \frac{1}{J}\frac{\mu_{_A}}{\bar L_{_A}}\frac{1}{C_{_A}} \right) \left( f(x^k) - f^* \right).
%	\end{equation*}
%	By taking expectation with respect to $x^k$ on both side and by the recursion, we obtain~\eqref{eq:str-conv-linear}. 
\end{proof}

%------------------------------------------------------------------------------------------------------
\subsection{Sublinear convergence for convex functions}
Next, we give the convergence result for convex but not strongly convex objective functions, i.e. $\mu = 0$ in (SC), based on the following bounded level set assumption. 
\begin{itemize}
\item (BL) Bounded level set:
$f$ is convex and attains its minimum value $f^*$ on a set $S$. There is a finite constant $R_0$ such that the level set for $f$ defined by $x^0$ is bounded, that is, 
\begin{equation}
\max_{x^* \in S} \max_{x} \{ \| x - x^* \|_{A}: f(x) \leq f(x^0) \} \leq R_0. 
\end{equation} 
\end{itemize}

\begin{lemma}
 	\label{lm:convex-upper}
Suppose the objective function $f$ satisfies (LC) and (BL). Then for all $x \in \mathcal{V}$ and $f(x)\leq f(x^0)$, 
	\begin{equation}
	\label{eq:convex-upper} 
f(x) - f(x^*)  \leq R_0 \| \nabla f(x) \|_{A^{-1}}.
	\end{equation}
\end{lemma}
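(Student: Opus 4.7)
The plan is to combine convexity of $f$ with the duality pairing of $\nabla f(x)$ and $x - x^*$, and then use the bounded level set assumption to control $\|x - x^*\|_A$.

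First, I would fix an arbitrary minimizer $x^* \in S$. Since $f$ is convex, the standard first-order characterization gives
\begin{equation*}
f(x^*) \geq f(x) + \langle \nabla f(x), x^* - x \rangle,
\end{equation*}
which rearranges to $f(x) - f(x^*) \leq \langle \nabla f(x), x - x^* \rangle$.

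Next, I would apply the duality between $\mathcal{V}$ and $\mathcal{V}'$ together with the identification $\|g\|_{\mathcal V'} = \|g\|_{A^{-1}}$ from \eqref{def:norm_Vp}. By the definition of the dual norm,
\begin{equation*}
\langle \nabla f(x), x - x^* \rangle \leq \|\nabla f(x)\|_{A^{-1}} \, \|x - x^*\|_{A}.
\end{equation*}

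Finally, the hypothesis $f(x) \leq f(x^0)$ puts $x$ in the sublevel set defined by $x^0$, so assumption (BL) directly yields $\|x - x^*\|_{A} \leq R_0$. Chaining the three inequalities gives the claim. No real obstacle arises here; the only mild subtlety is that (BL) allows one to pick any $x^* \in S$ with a uniform bound $R_0$, so the statement is independent of which minimizer is chosen, and the assumption (LC) is not actually needed for this particular bound beyond ensuring $\nabla f(x)$ is well-defined and the dual norm is meaningful.
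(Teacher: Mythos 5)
Your proof is correct and follows essentially the same route as the paper's: convexity gives $f(x)-f(x^*)\leq \langle \nabla f(x), x-x^*\rangle$, the dual-norm (Cauchy--Schwarz in the $A$-inner product) bound gives the factor $\|\nabla f(x)\|_{A^{-1}}\|x-x^*\|_A$, and (BL) bounds $\|x-x^*\|_A$ by $R_0$. Your added remark that (LC) is not really needed beyond well-definedness of the gradient is a fair observation, but otherwise there is nothing to change.
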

\begin{proof} By convexity and (BL), for $x \in \mathcal{V}$ and $f(x)\leq f(x^0)$,
 \begin{equation*}
f(x) - f^* \leq \langle \nabla f(x^k), x - x^* \rangle \leq \| \nabla f(x) \|_{A^{-1}} \| x - x^* \|_{A} \leq R_0 \| \nabla f(x) \|_{A^{-1}}.
\end{equation*}
\end{proof}

We still use the same step size and show that RFASD converges sublinearly for convex objective function $f$.

\begin{theorem}\label{thm:conv-sublinear}
 Suppose the objective function and space decomposition satisfy (LCi), (BL) and (SC) with $\mu_{_A} = 0$. Let $\{ x^k\}$ be the sequence generated by Algorithm \ref{alg:RFASD}. Then for all $k>0$, we have
\begin{equation}\label{ine:conv-sublinear}
\mathbb{E}\left [ f(x^k) \right ] - f^* \leq \frac{d_0}{1 + d_0\, c\, k} \leq \frac{2R_0^2 J \bar{L}_{_A} C_{_A}}{k},
\end{equation}
where $d_0 = f(x^0) -f^*$ and $c = 1/(2R_0^2J \bar L_{_A}C_{_A}).$

\end{theorem}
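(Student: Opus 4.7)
The plan is to combine the sufficient decay inequality \eqref{eq:decay} with the convex optimality-gap bound \eqref{eq:convex-upper}, and then convert the resulting quadratic recursion into a $1/k$ rate by the standard reciprocal-telescoping trick.

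First I would verify that each iterate lies almost surely in the level set $\{x:f(x)\leq f(x^0)\}$, so that Lemma~\ref{lm:convex-upper} is applicable pathwise. This follows from a per-step monotonicity of $f$: applying (LCi) along the update $x^{k+1}=x^k+\alpha_k s_{i_k}$ and using that $\langle \nabla f(x^k), s_{i_k}\rangle = -\|s_{i_k}\|_{A_{i_k}}^2$ by the definition of $s_{i_k}$, one obtains
\begin{equation*}
f(x^{k+1}) \leq f(x^k) - \frac{1}{2L_{A,i_k}}\|s_{i_k}\|_{A_{i_k}}^2 \leq f(x^k) \quad \text{a.s.}
\end{equation*}
Consequently Lemma~\ref{lm:convex-upper} yields the pathwise bound $\|\nabla f(x^k)\|_{A^{-1}}^2 \geq (f(x^k)-f^*)^2/R_0^2$.

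Next, let $e_k := \mathbb{E}[f(x^k)]-f^*$ and $c := 1/(2R_0^2 J\bar L_{_A} C_{_A})$. Taking conditional expectation in \eqref{eq:decay}, substituting the pathwise lower bound just derived, and then taking full expectation gives
\begin{equation*}
\mathbb{E}[f(x^{k+1})] - \mathbb{E}[f(x^k)] \leq -c\,\mathbb{E}\bigl[(f(x^k)-f^*)^2\bigr].
\end{equation*}
Since $f(x^k)-f^* \geq 0$ a.s., Jensen's inequality converts this into the scalar recursion $e_{k+1} \leq e_k - c\,e_k^2$.

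Finally, I would turn this recursion into the stated closed-form rate. If $e_k=0$ the claim is trivial; otherwise, dividing by $e_k e_{k+1}>0$ and using $e_{k+1}\leq e_k$ gives
\begin{equation*}
\frac{1}{e_{k+1}} \geq \frac{1}{e_k} + c\frac{e_k}{e_{k+1}} \geq \frac{1}{e_k}+c,
\end{equation*}
and telescoping yields $1/e_k \geq 1/d_0 + ck$, which rearranges to $e_k\leq d_0/(1+d_0 c k)$; dropping the $1$ in the denominator delivers the second bound $2R_0^2 J\bar L_{_A} C_{_A}/k$. I expect the main (minor) subtlety to be handling the expectations cleanly—specifically, justifying that \eqref{eq:decay} may be combined with a lower bound on $\|\nabla f(x^k)\|_{A^{-1}}$ that itself depends on the random iterate $x^k$, which is what the pathwise monotone decrease step above is designed to enable.
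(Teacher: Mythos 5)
Your proposal is correct and follows essentially the same route as the paper: combine the sufficient decay estimate \eqref{eq:decay} with Lemma~\ref{lm:convex-upper} to obtain the quadratic recursion $d_{k+1}-d_k\leq -c\,d_k^2$, then telescope the reciprocals. You are in fact slightly more careful than the paper, which silently skips the pathwise monotonicity needed to keep $x^k$ in the level set of $x^0$ and the Jensen step needed to pass from $\mathbb{E}\bigl[(f(x^k)-f^*)^2\bigr]$ to $\bigl(\mathbb{E}[f(x^k)]-f^*\bigr)^2$; both of these details are handled correctly in your argument.
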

\begin{proof}
Note that \eqref{eq:decay} can be written as
\begin{align*}
\mathbb{E} \left[f(x^{k+1}) \right] - f(x^*) - \left( f(x^k) - f(x^*) \right) &\leq  - \frac{1}{2 \bar L_{_A}J}\frac{1}{C_{_A}}\| \nabla f(x^k) \|_{A^{-1}}^2 \\
&\leq -c  \left( f(x^k) - f(x^*) \right)^2
%\leq -  c \, d_{k}^2 \leq 0,
\end{align*}
where $c = 1/(2R_0^2J \bar L_{_A}C_{_A}).$ and the last inequality is derived based on~\eqref{eq:convex-upper} with $x=x^k$.  Now denoting $d_{k} = \mathbb{E}\left[ f(x^k)\right] - f^*$ and taking expectation with respect to $x^k$, we have
\begin{equation*}
d_{k+1} - d_k \leq -c d_{k}^2 \leq 0.
\end{equation*}
Based on the above inequality, we obtain
\begin{equation*}
\frac{1}{d_{k+1}} - \frac{1}{d_{k}} = \frac{d_{k} - d_{k+1}}{d_{k} d_{k+1}} \geq \frac{d_{k} - d_{k+1}}{\left(d_{k}\right)^2} \geq c.
\end{equation*}
Recursively applying this inequality, we obtain 
\begin{equation} \label{ine:d_k}
\frac{1}{d_{k}} \geq \frac{1}{d_0} + c k. 
\end{equation}
which implies~\eqref{ine:conv-sublinear}. 
\end{proof}

\begin{remark}
The parameters $R_0, J, \bar L_{_A}$, and $C_{_A}$ can be dynamically changing, i.e., as a function of $k$. For example, we can use $R_k:=\max_{x^* \in S} \max_{x} \{ \| x - x^* \|: f(x) \leq f(x^k) \}$, which is smaller than $R_0$. The space decomposition and local Lipschitz constants could also be improved during the iterations. In these cases, we use $c_k$ to denote the constant and the last inequality~\eqref{ine:d_k} holds with the second term $ck$ on the right-hand-side being replaced by $\sum_{i=1}^k c_i.$
\end{remark}

\subsection{Complexity}
Based on the convergence results Theorem~\ref{thm:str-con-linear} and~\ref{thm:conv-sublinear}, we can estimate the computational complexity of the proposed RFASD method and compare with the GD, PGD, RCD, and RBCD methods.  As usual, for a prescribed error $\epsilon > 0$, we first estimate how many iterations are needed to reach the tolerance and then estimate the overall computational complexity based on the cost of each iteration.  

For gradient-based methods, the main cost per iteration is the evaluation of gradient $\nabla f(x^k)$.  In general, it may take $\mathcal{O}(N^2)$ operations.  In certain cases, the cost could be reduced.  For example, when $\nabla f(x^k)$ is \emph{sparse}, e.g., computing one coordinate component of $\nabla f(x^k)$ only needs $\mathcal{O}(1)$ operations, then the computing $\nabla f(x^k)$ takes $\mathcal{O}(N)$ operations.  Another example is to use advanced techniques, such as fast multipole method~\cite{rokhlinRapidSolutionIntegral1985,greengardFastAlgorithmParticle1987}, to compute $\nabla f(x^k)$, then the cost could be reduced to $\mathcal{O}(N \log N)$.  In our discussion, we focus on the general case (referred as \emph{dense} case) and sparse case.  For subspace decomposition type algorithms, including RCD, BRCD, and RFASD, each iteration only needs to compute $\nabla f(x^k)$ restricted on one subspace and, therefore, the cost of computing the gradient is $\mathcal{O}(N n_i)$ (dense case) or $\mathcal{O}(n_i)$ (sparse case).  Note $n_i = 1$ for RCD.  When the preconditioning technique is applied, the extra cost is introduced besides computing the gradient.  We assume computing the inverse of an $n \times n$ matrix is $\mathcal{O}(n^p)$ with $p\geq 1$, then the extra cost for PGD is $\mathcal{O}(N^p)$ since the need of $A^{-1}$.  For the proposed RFASD, the extra cost is reduced to $\mathcal{O}(n_i^p)$ since we only need to compute $A_i^{-1}$ on each subspace.  Now, we summarize the complexity comparison in Table~\ref{table:examples}.

%When reduced to one subspace $\mathcal V_i$, only $n_i=\dim \mathcal V_i$ components of the gradient is needed and thus the cost is reduced to $\mathcal O(n_i N)$. When $\nabla f(x^k)$ depends on $x^k$ sparsely, e.g., computing one coordinate component of $\nabla f(x^k)$ only needs $\mathcal O(1)$ or $\mathcal O(\log N)$ operations \mnote{Use fast multipole to compute gradient.}. In RFASD, we include a subscript $_A$ to emphasize the condition number is measured in $A$-norm which is usually much smaller than the one measured in $\ell^2$-norm. In each sub-space, we may need to compute $A_i^{-1}$ which is a $n_i\times n_i$ matrix. We assume the complexity for computing $A_i^{-1}$ is $\mathcal O(n_i^p)$ where $p\geq 1$. 

%The number of subspaces alone is meaningless. A better measurement for the complexity is that for one cyclic loop over all subspaces: $\sum_{i=1}^J \mathcal O(n_i^p)$.

\begin{table}[htp]
	\centering
	\caption{Complexity comparisons. ($\epsilon$: target accuracy; $L$: Lipschitz constant in $l^2$-norm; $\mu$: strong convexity constant in $l^2$-norm; $L_{_A}$: Lipschitz constant in $A$-norm; $\mu_{_A}$: strong convexity constant in $A$-norm; $\bar L := \frac{1}{J} \sum_{i=1}^J L_i$: averaged Lipschitz constant in $l^2$-norm; $\bar L_{_A} := \frac{1}{J} \sum_{i=1}^J L_{A,i}$: averaged Lipschitz constant in $A$-norm. GD: gradient descent; PGD: prreconditioned gradient descent; RCD: randomized coordinate descent; RBCD: randomized block coordinate descent; RFASD: randomized fast subspace descent.)}
	\renewcommand{\arraystretch}{2}
	\begin{tabular}{@{} c c c c c @{}}
	\toprule
		&   Convex  & Strongly convex &  \multicolumn{2}{c}{Cost per iteration} \\
		\cline{4-5}
		& & & dense & sparse 		
 \\  \hline
		GD & $\displaystyle \mathcal{O}\left(  \frac{L}{\epsilon} \right)$ & $\displaystyle \mathcal{O} \left(  \frac{L}{\mu} |\log \epsilon|\right)$ & $\displaystyle \mathcal{O}(N^2)$ & $\mathcal{O}(N)$ 

\medskip \\%	\hline 
		
		PGD & $\displaystyle \mathcal{O}\left(  \frac{L_{_A}}{\epsilon} \right)$ & $\displaystyle \mathcal{O} \left(  \frac{L_{_A}}{\mu_{_A}} |\log \epsilon|\right)$ & $\displaystyle \mathcal{O}(N^2 + N^p)$ & $\mathcal{O}(N^p)$
		\medskip \\   %\hline
		RCD & $\displaystyle \mathcal{O} \left( N \frac{\bar{L}}{\epsilon} \right) $ & $\displaystyle \mathcal{O}\left( N \frac{\bar{L}}{\mu} |\log \epsilon| \right)$& $\displaystyle \mathcal{O}(N)$ & $\mathcal{O}(1)$
		\medskip \\           %\hline
		% Uniform BCD & $\displaystyle \mathcal{O}\left( n L/\epsilon \right)$ & $\displaystyle \mathcal{O}\left(n \frac{L }{\mu} |\log \epsilon| \right)$ & $\displaystyle \mathcal{O}(N n_i)$ & $\mathcal{O}(n_i)$ \\           %\hline
		RBCD & $\displaystyle \mathcal{O}\left( J \frac{\bar L}{\epsilon} \right)$ & $\displaystyle \mathcal{O}\left( J \frac{ \bar L }{\mu} |\log \epsilon| \right)$& $\displaystyle \mathcal{O}(N n_i)$ & $\mathcal{O}(n_i)$
	\medskip	\\           %\hline
		%   Uniform RFASD &$ \displaystyle \mathcal{O}\left( C_{_A} J L_{_{\small A}}/\epsilon \right)$ & 
		%$ \displaystyle \mathcal{O}\left( C_{_A} J \frac{L_{_{\small A}}}{\mu_{_{\small A}}} |\log \epsilon| \right)$&$\displaystyle \mathcal{O}(N n_i+ n_i^s)$ & $\mathcal{O}(n_i+ n_i^s)$  \\           %\hline
		RFASD & $ \displaystyle \mathcal{O}\left( C_{_A} J \frac{\bar L_{_A}} {\epsilon} \right)$ & 
		$ \displaystyle \mathcal{O}\left( C_{_A} J  \frac{\bar L_{_A}}{\mu_{_{\small A}}} |\log \epsilon| \right)$ & $\displaystyle \mathcal{O}(N n_i+ n_i^p)$ & $\mathcal{O}(n_i^p)$ \medskip \\    
\bottomrule
		%   Uniform FAS &$ \displaystyle \mathcal{O}\left( \frac{ C_{_A} J L \kappa_{\max}}{\epsilon} \right)$ & 
		%$ \displaystyle \mathcal{O}\left( \frac{C_{_A} J L \kappa_{\max}}{\mu} |\log \epsilon| \right)$ &$\displaystyle \mathcal{O}(N)$ & $\mathcal{O}(1)$ \\\hline
		% Non-uni FAS & $ \displaystyle \mathcal{O}\left( \frac{ C_{_A} \bar L_{_A} \kappa_{\max}}{\epsilon} \right)$ & 
		%$ \displaystyle \mathcal{O}\left( \frac{C_{_A} \bar L_{_A} \kappa_{\max}}{\mu} |\log \epsilon| \right)$& $\displaystyle \mathcal{O}(N)$ & $\mathcal{O}(1)$ \\\hline
	\end{tabular}
	\label{table:examples}
\end{table}

From Table~\ref{table:examples}, it is clear that RFASD can take advantage of the preconditioning effect, i.e., $\frac{L_{_A}}{\mu_A} \ll \frac{L}{\mu}$ or $\frac{\bar L_{_A}}{\mu_A} \ll \frac{\bar L}{\mu}$.  Meanwhile, there is no need to invert $A$ globally and but to compute $A_i^{-1}$ on each subspace, which reduces the computational cost in the sense that $\mathcal{O}(Nn_i + n_i^p) \ll \mathcal{O}(N^2 + N^p)$ (dense case) or $\mathcal{O}(n_i^p) \ll \mathcal{O}(N^p)$ (sparse case) if $n_i \ll N$ and $p > 1$.  Of course, the key is a stable space decomposition in $A$-norm such that the stability constant $C_{_A}$ can be kept small.  In next section, we use Nesterov's ``worst'' problem~\cite{nesterov2013introductory} as an example to demonstrate how to achieve this in practice.

%-------------------------------------------------------------------------------------------------------
\section{Examples} \label{sec:examples}
%-------------------------------------------------------------------------------------------------------
In this section, we give some examples of the RFASD method and use the example introduced by Nesterov~\cite{nesterov2013introductory} to discuss different methods. 

We first recall the Nesterov's ``worst'' problem~\cite{nesterov2013introductory}
\begin{example}\label{prob:NW}\rm
For $x \in \mathbb{R}^N$, consider the non-constrained minimization problem~\eqref{eq:min} with
\begin{equation}\label{eg:Nes_worst_fun}
f(x) := f_{L,r} (x) = \frac{L}{4} \left( \frac{1}{2} \left( x_1^2 + \sum_{i=1}^{r-1} (x_i - x_{i-1})^2 + x_r^2 \right) - x_1 \right),
\end{equation}
where $x_i$ represents the $i$-th coordinate of $x$ and $r < N$ is a constant integer that defines the intrinsic dimension of the problem. The minimum value of the function is 
\begin{equation}\label{soln:NW}
f_* = \frac{L}{16} \left( -1 + \frac{1}{r+1} \right). 
\end{equation}
%The condition number measured in $\ell^2$-norm is $\mathcal{O}(r^2)$.  	
\end{example}

\subsection{Randomized block coordinate descent methods} \label{sec:RBCD}
We follow~\cite{nesterov2012efficiency} to present the RBCD methods.  Let $\mathcal{V} = \mathbb{R}^N$ endowed with standard $\ell^2$-norm $\|\cdot\|$, i.e. $A = I$. Define a partition of the unit matrix 
$$
I_n = (U_1, \cdots, U_J) \in \mathbb{R}^{N \times N}, ~ U_i \in \mathbb{R}^{N \times n_i}, ~ i = 1,\cdots, J. 
$$
Now we consider the space decomposition $\mathcal{V} = \oplus_{i=1}^J \mathcal{V}_i,$ where $\mathcal{V}_i = \operatorname{Range}(U_i)$ and $\sum_{i=1}^J n_i = N$.  Naturally, $I_i = U_i$ and $R_i: U_i^{\intercal}$ in this setting. For each subspace, we also use the $\ell^2$-norm $\|\cdot\|$, i.e. $A_i = I_{n_i}$ is the identity matrix of size $n_i$. In this setting, RFASD (Algorithm~\ref{alg:RFASD}) is given by  
\begin{equation} \label{eqn:RCD}
x^{k+1} = x^k + \frac{1}{L_{i_k}} s_{i_k}, \quad s_{i_k} = - U_{i_k} U_{i_k}^{\intercal} \nabla f(x^k).
\end{equation}
This is just the RBCD algorithm proposed in~\cite{nesterov2012efficiency}.  Moreover, if the space decomposition is coordinate-wise, i.e., $n_i = 1$, $i=1, 2, \cdots, J = N$, then it is reduced to the RCD method. 

Regarding the convergence analysis, since the subspace decomposition is direct and orthogonal in $\ell^2$ inner product, in this case, we have
\begin{equation*}
\sum_{i=1}^J s_i = - \nabla f(x^k) \quad \text{and} \quad
\sum_{i=1}^J \| s_i \|^2 = \| \nabla f(x^k) \|^2,
\end{equation*}
which implies that $C_{_A} = 1$ in (SD). Moreover, because the $l^2$-norm is used here, Lipschitz constant and strong convexity constant are measured in $l^2$-norm and, hence, we drop the subscript ${_A}$ for those constants. Finally, we apply  Theorem~\ref{thm:str-con-linear} and \ref{thm:conv-sublinear} and recovery the classical convergence results of RBCD~\cite{nesterov2012efficiency} as follows,
\begin{itemize} %\itemsep 0pt
	\item Convex case: $\displaystyle   \frac{2 J \bar L R_0^2}{k} $
	\item Strongly convex case: $\displaystyle  \left( 1 - \frac{\mu}{J \bar L}  \right)^k $
\end{itemize}
%Here $\bar L = \sum_{i=1}^J L_i/J$. 
 
% \begin{remark}\rm 
%When $J = N$, it is the classical coordinate descent Method. Dividing into blocks, the convergence will be faster if each local subspace problem can be solved more accurately. 
%\end{remark}

%\XH{(Needs to rewrite this example)}
%The condition number of $f$, however, is large measured in $\ell^2$-norm. Consequently, the convergence is slow for ill-conditioned problems. 

Consider Example~\ref{prob:NW} with $r = N$, since $l^2$-norm is used, it is easy to see that $\bar L \leq L$ and $\mu = \mathcal{O}(L\,N^{-2})$. Therefore, the condition number is $\frac{\bar L}{\mu} = \mathcal{O}(N^{2})$ and, for strongly convex case, the convergence rate is $\left( 1 - \frac{1}{J} \frac{1}{N^2} \right)$ and, according to Table~\ref{table:examples}, it requires $\mathcal{O}(N^3 |\log \epsilon|)$ operations (due to the fact that this problem is sparse) to achieve a given accuracy $\epsilon$.  This could be quite expensive, even impractical, for large $N$, i.e., large-scale problems. 

\subsection{Randomized fast subspace descent methods} \label{sec:RPSD}
The RFASD method allows us to use a preconditioner $A$ without computing $A^{-1}$. We chose an appropriate $A$-norm $\|\cdot\|_A$ defined by an SPD matrix $A$.  
%and assume we can find a space decomposition which is stable w.r.t. $A$-norm. For completeness, we restate the $A$-stable decomposition here. 
%
Let 
\begin{equation*}
\mathcal{V} = \mathcal{V}_1 + \mathcal{V}_2 + \cdots + \mathcal{V}_J, \quad \mathcal{V}_i \subset \mathcal{V}, \quad i = 1,\cdots, J,
\end{equation*}
be a space decomposition. For each subspace, we still use the $A$-norm. Namely we use $\| v_i \|_{A_i} = \| v_i \|_{A}$ for all $v_i\in \mathcal V_i$. One can easily verify that $A_i = R_i AI_i$ which is the so-called Galerkin projection of $A$ to the subspace $\mathcal V_i$.  
%In this setting, RPSD is exactly given by RFASD (Algorithm~\ref{alg:RFASD}) 

%
%There exists a constant $C_{_A} > 0$, such that for any $v \in \mathcal{V}$,  there exists a decomposition $v = \sum _{i=1}^J v_i, $ with $v_i\in \mathcal V_i$ for $i=1,\ldots, J$, and
%	$$
%\sum_{i=1}^J \|v_i\|_{A}^2 \leq C_{_A}\|v\|_{A}^2.
%	$$

In this case, the averaged Lipschitz constant $\bar L_{_A}$ and the strong convexity constant $\mu_{_A}$ are measured in $A$-norm. Based on Theorem~\ref{thm:str-con-linear} and \ref{thm:conv-sublinear}, we naturally have
\smallskip 
\begin{itemize} %\itemsep 0pt
	\item Convex case: $\displaystyle \frac{2 J C_{_A} \bar L_{_A} R_0^2}{k} $
	\item Strongly convex case: $\displaystyle \left( 1 - \frac{1}{JC_{_A}}\frac{\mu_{_A}}{\bar L_{_A}} \right)^k  $
\end{itemize}
\smallskip
Comparing with the convergence results of RBCD, the key here is to design an appropriate preconditioner $A$ which induces the $A$-norm and corresponding stable decomposition such that $C_{_A} \bar L_{_A} \ll \bar L $ for convex case or $C_{_A}\bar L_{_A}/\mu_{_A} \ll \bar L/\mu$ for strongly convex case.  Then we will achieve speedup comparing with RCD/RBCD.

Of course, the choices of the preconditioner and the stable decomposition are usually problem-dependent.  Let us again consider Example~\ref{prob:NW} with $r = N$. Note that the objective function $f$ can be written in the following matrix format
\begin{align}\label{eqn:NW-matrix}
f(x) &= f_{L,N}(x) = \frac{1}{2} ( A x,x) - (x,  b), \\
 A &= \frac{L}{2} \operatorname{tridiag}(-1,2,-1) \in \mathbb{R}^{N \times N} \ \text{and} \ b = \frac{L}{4}e_1,
\end{align}
where $e_1 = (1, 0, \cdots, 0)^T \in \mathbb{R}^N$.  A good choice of the preconditioner is $A$ itself.  It is easy to verify that, when measuring in $A$-norm, the averaged Lipschitz constant is $\bar L_{_A} = 1$ and the strong convexity constant is $\mu_{_A} = 1$. Therefore, the condition number measured in $A$-norm is $\frac{\bar L_{_A}}{\mu_{_A}} = 1 \ll \mathcal{O}(N^2)$, i.e., much smaller than the condition number measure in $l^2$-norm.  

To achieve good overall performance, we also need to find a stable subspace decomposition to avoid inverting $A$ directly while keep $C_A$ small.  Note that $A$ (to be precise, $\frac{2}{L \, N^2} A$) here is essentially a central finite difference approximation of $u''(x)$ on the interval $[0,1]$ using a uniform mesh with $N$ subintervals.   A stable decomposition can be given by a multilevel decomposition used in the geometric multigrid method~\cite{trottenbergMultigrid2000,hackbuschMultiGridMethodsApplications2013}.  We postpone the details of this stable decomposition in Section~\ref{sec:numerics}.  Using such a decomposition, we have $n_i = 1$, $J = \mathcal{O}(N \log \, N)$, and $C_{_A} = \mathcal{O}(1)$. Therefore, RFASD has complexity $\mathcal{O}(N \log \, N |\log \, \epsilon|) $ to achieve a given accuracy $\epsilon$, which is quasi-optimal and scalable for large $N$. Comparing with the RCD/RBCD discussed in the previous section, the improvement is evident. 

%\XH{(Again, this part needs to be rewritten)}
%and the condition number $\bar L_{_A}/\mu_{_A}$ is usually much smaller than that measured in $\ell^2$-norm. Consequently, the convergence is speed up for ill-conditioned problems. 

%\begin{example}\rm
% Consider the Nestrov's `worst' problem but measured in a discrete $H^1$-norm and the condition number is \LC{use a discrete $H^1$-norm}. The space decomposition is given by \LC{compare the constant involved}  
%\end{example}

%Therefore our RFASD also generalize FAS to a randomized version. 

%---------------------------------------------------------------------------------------------
\section{Numerical Experiments} \label{sec:numerics}
%---------------------------------------------------------------------------------------------
%\XH{(There are details of numerical experiments missing here.  I cannot revise this part until I see the codes and understand those details.)}

In this section, we present some numerical results for solving the Nesterov's worst function described in Example \ref{eg:Nes_worst_fun}.  We focus on the strongly convex case, i.e., $r=N$, to better demonstrate the advantages of using RFASD. The comparisons are made between RCD and RFASD.  The results based on RCD and RFASD with permutation (non-replacement sampling) (denoted by  RCD$_{\rm{perm}}$ and RFASD$_{\rm{perm}}$, respectively), cyclic CD, and cyclic FASD will also be presented to illustrate the influence of randomization.

For the implementation RCD and RCD$_{\rm{perm}}$, we follow~\cite{nesterov2012efficiency} and uses the step size $\alpha_k = \frac{1}{L_{i_k}}$, see~\eqref{eqn:RCD}.  For the Nestrov's worst function, i.e, Example \ref{eg:Nes_worst_fun}, we have $L_{i} = \| a_i \|$, where $a_i$ is the $i$-th column of $A = \frac{L}{2} \operatorname{tridiag}(-1,2,-1) \in \mathbb{R}^{N \times N}$, and, therefore, $\alpha_k = \frac{1}{\|a_i\|}$ is used in our numerical experiments.  The same step size is used for cyclic CD.  In addition, due to the definition of $A$, $L_1=L_N \approx L_2 = \cdots = L_{N-1}$. Thus, we use uniform sampling in RCD, i.e., $p_i = \frac{1}{N}$. 

%Our implementation of RFASD is basically RPSD discussed in Section~\ref{sec:RPSD}.  
According to the matrix format of the Nesterov's worst function~\eqref{eqn:NW-matrix}, we use $A = \operatorname{tridiag}(-1,2,-1) \in \mathbb{R}^{N\times N}$ as the preconditioner.  As suggested in~Section~\ref{sec:RPSD}, the stable decomposition used here is based on the geomerical multigrid methods~\cite{trottenbergMultigrid2000,hackbuschMultiGridMethodsApplications2013}. We refer to \cite{Yavneh:2006multigrid} for a brief introduction of the multigrid method. More precisely, we treat the $i$-th component, $x_i$,  as the function value at an artificial grid point at $ih$ with $h = 1/N$. We chose $N=2^{\rm{level}}-1$ and use a multilevel nodal decomposition as the space decomposition~\eqref{eqn:space-decomp}. Figure~\ref{fig:multilevel-decomp} illustrates such a multilevel space decomposition for the case $\mathcal{V} = \mathbb{R}^7$, i.e. $\rm{level}=3$. Note that we have $J < 2N$ and $n_i=1$, $i=1,\cdots, J$. With the choice of $A_i = R_iA I_i$, it is well-known that $C_A = \mathcal{O}(1)$ (see e.g. \cite{Xu:1992Iterative}).  Note that, in this setting, $L_{A, i} = 1$, $i=1,\cdots, J$, therefore, the step size is simply $\alpha_k = 1$ in our numerical experiments and uniform sampling, $p_i = \frac{1}{J}$, is used for RFASD.  For RFASD$_{\rm{perm}}$ and cyclic FASD, we use the same space decomposition and step size.  

\begin{figure}
\caption{Example: multilevel nodal decomposition for $\mathcal{V} = \mathbb{R}^7 = \mathcal{V}_1 + \cdots + \mathcal{V}_{11}$} \label{fig:multilevel-decomp}
\smallskip
\begin{center}
\begin{tikzpicture}[scale = 0.9]
% first level
\draw (0,0) -- (14,0);  
\filldraw [black] (1,0) circle (2pt);
\filldraw [black] (3,0) circle (2pt);
\filldraw [black] (5,0) circle (2pt);
\filldraw [black] (7,0) circle (2pt);
\filldraw [black] (9,0) circle (2pt);
\filldraw [black] (11,0) circle (2pt);
\filldraw [black] (13,0) circle (2pt);
\node[above] at (1,0) {\tiny $\mathcal{V}_1 = \operatorname{span}\{ e_1 \}$};
\node[above] at (3,0) {\tiny $\mathcal{V}_2 = \operatorname{span}\{ e_2 \}$};
\node[above] at (5,0) {\tiny $\mathcal{V}_3 = \operatorname{span}\{ e_3 \}$};
\node[above] at (7,0) {\tiny $\mathcal{V}_4 = \operatorname{span}\{ e_4 \}$};
\node[above] at (9,0) {\tiny $\mathcal{V}_5 = \operatorname{span}\{ e_5 \}$};
\node[above] at (11,0) {\tiny $\mathcal{V}_6 = \operatorname{span}\{ e_6 \}$};
\node[above] at (13,0) {\tiny $\mathcal{V}_7 = \operatorname{span}\{ e_7 \}$};
% second level
\draw (0,-2) -- (14,-2);  
\filldraw [black] (3,-2) circle (2pt);
\filldraw [black] (7,-2) circle (2pt);
\filldraw [black] (11,-2) circle (2pt);
\draw [->] (3,-1.5) -- (3, -0.2);
\draw [->] (2.9,-1.5) -- (1.1, -0.2);
\draw [->] (3.1,-1.5) -- (4.9, -0.2);
\node[above] at (3,-2) {\tiny $\mathcal{V}_8 = \operatorname{span}\{ \frac{1}{2}e_1 + e_2 + \frac{1}{2} e_3 \}$};
\node at (3,-0.8) {\tiny $1$};
\node at (2,-0.8) {\tiny $\frac{1}{2}$};
\node at (4,-0.8) {\tiny $\frac{1}{2}$};
\draw [->] (7,-1.5) -- (7, -0.2);
\draw [->] (6.9,-1.5) -- (5.1, -0.2);
\draw [->] (7.1,-1.5) -- (8.9, -0.2);
\node[above] at (7,-2) {\tiny $\mathcal{V}_9 = \operatorname{span}\{ \frac{1}{2}e_3 + e_4 + \frac{1}{2} e_5 \}$};
\node at (7,-0.8) {\tiny $1$};
\node at (6,-0.8) {\tiny $\frac{1}{2}$};
\node at (8,-0.8) {\tiny $\frac{1}{2}$};
\draw [->] (11,-1.5) -- (11, -0.2);
\draw [->] (10.9,-1.5) -- (9.1, -0.2);
\draw [->] (11.1,-1.5) -- (12.9, -0.2);
\node[above] at (11,-2) {\tiny $\mathcal{V}_{10} = \operatorname{span}\{ \frac{1}{2}e_5 + e_6 + \frac{1}{2} e_7 \}$};
\node at (11,-0.8) {\tiny $1$};
\node at (10,-0.8) {\tiny $\frac{1}{2}$};
\node at (12,-0.8) {\tiny $\frac{1}{2}$};
%
% third level
\draw (0,-4) -- (14,-4);  
\filldraw [black] (7,-4) circle (2pt);
\draw [->] (7,-3.5) -- (7, -2.2);
\draw [->] (6.9,-3.5) -- (3.1, -2.2);
\draw [->] (7.1,-3.5) -- (10.9, -2.2);
\node[above] at (7,-4) {\tiny $\mathcal{V}_{11} = \operatorname{span}\{ \frac{1}{4}e_1 + \frac{1}{2} e_2 + \frac{3}{4} e_3 + e_4 + \frac{3}{4} e_5 + \frac{1}{2} e_6 + \frac{1}{4} e_7 \}$};
\node at (7,-2.8) {\tiny $1$};
\node at (5,-2.8) {\tiny $\frac{1}{2}$};
\node at (9,-2.8) {\tiny $\frac{1}{2}$};
\end{tikzpicture}
\end{center}
\end{figure}

For all the experiments, we choose initial guess to be the vector $(1,\cdots,1)^T \in \mathbb{R}^N$ and stop the iterations when the relative norm of the gradient satisfies $\frac{\| \nabla f(x^k) \|}{\| \nabla f(x^0) \|} \leq 10^{-6}$.  Due to the randomness in the algorithms, we repeat each test $10$ times and report the average results.

\begin{table}[h]
\centering
\caption{Performance of RCD, RCD$_{\rm{perm}}$, and cyclic RCD (for problems of size $N\geq 127$, all three methods take more than $10^6$ iterations to converges and, therefore, not reported here.)} \label{tab:CD}
\begin{tabular}{@{} c c c c c c c @{}}
\toprule
Size & \multicolumn{2}{c}{RCD} & \multicolumn{2}{c}{RCD$_{\rm{perm}}$} & \multicolumn{2}{c}{cyclic CD}	\\ \cline{2-7}
$N$ & $\#$Iter. & $\#$Epoch & $\#$Iter. & $\#$Epoch & $\#$Iter. & $\#$Epoch \\ \hline 
7     &  1.4129e3  & 201.84   & 944.50   & 134.93   & 819      & 117 \\ 
15    &  1.1054e4  & 736.96   & 7.4658e3 & 497.72   & 6.465e3  & 431 \\
31    &  8.3177e4  & 2.6831e3 & 5.6284e4 & 1.8156e3 & 4.8576e4 & 1.5670e3 \\
63    &  6.1011e5  & 9.6843e3 & 4.1218e5 & 6.5425e3 & 3.5519e5 & 5.6379e3 \\
%127   &    -       &    -     &   -      &    -     &     -    &     -    \\
%255   &    -       &    -     &   -      &    -     &     -    &     -    \\
%511   &    -       &    -     &   -      &    -     &     -    &     -    \\
%1,023 &    -       &    -     &   -      &    -     &     -    &     -    \\
%2,047 &    -       &    -     &   -      &    -     &     -    &     -    \\
%4,095 &    -       &    -     &   -      &    -     &     -    &     -    \\
\bottomrule
\end{tabular}
\end{table}

\begin{table}[h]
\centering
\caption{Performance of RFASD, RFASD$_{\rm{perm}}$, cyclic FASD} \label{tab:FASD}
\begin{tabular}{@{} c c c c c c c c @{}}
\toprule
\multicolumn{2}{c}{size} & \multicolumn{2}{c}{RFASD} & \multicolumn{2}{c}{RFASD$_{\rm{perm}}$} & \multicolumn{2}{c}{cyclic FASD}	\\ \hline
$N$ & $J$ & $\#$Iter. & $\#$Epoch & $\#$Iter. & $\#$Epoch & $\#$Iter. & $\#$Epoch \\ \hline 
7     & 11    & 104.90   & 9.54  & 46.90    & 4.26 & 69       & 6.27 \\
15    & 26    & 323.20   & 12.43 & 157.30   & 6.05 & 213      & 8.19 \\
31    & 57    & 830.70   & 14.57 & 467.20   & 8.20 & 518      & 9.09 \\ 
63    & 120   & 1.9249e3 & 16.04 & 975.40   & 8.13 & 1.103e3  & 9.19 \\
127   & 247   & 3.8384e3 & 15.54 & 2.0484e3 & 8,29 & 2.278e3  & 9.22 \\
255   & 502   & 7.7405e3 & 15.42 & 4.301e3  & 8.57 & 4.637e3  & 9.24 \\
511   & 1,013 & 1.5797e4 & 15.59 & 8.2902e3 & 8.18 & 9.632e3  & 9.51 \\
1,023 & 2,036 & 3.2071e4 & 15.75 & 1.7563e4 & 8.63 & 1.9352e4 & 9.50 \\
2,047 & 4,083 & 6.7138e4 & 16.44 & 3.3591e4 & 8.23 & 3.88e4   & 9.50 \\
4,095 & 8,178 & 1.2813e5 & 15.67 & 6.9891e4 & 8.55 & 7.7701e4 & 9.50 \\
\bottomrule
\end{tabular} 
\end{table}

In Table~\ref{tab:CD} and~\ref{tab:FASD}, we compare all the algorithms by reporting the number of iterations ($\#$Iter.) and the number of epochs ($\#$Epoch). Here, we borrow the terminology from machine leaning and refer all the subspaces as one epoch.  Therefore, the number of epochs is defined as $ \displaystyle \# \text{Epoch} = \frac{\#\text{Iter.}}{J}$.  From the two tables, we can immediately see that the FASD-type algorithms outperform CD-type algorithms. For CD-type algorithms, we can see that the number of iterations grows like $\mathcal{O}(N^3)$, which confirms our discussion in Section~\ref{sec:RBCD}.  Since $J=N$ in this case, the number of epochs behaves like $\mathcal{O}(N^2)$, which is consistent with what we see from Table~\ref{tab:CD}. Among the three CD-type algorithms, RCD$_{\rm{perm}}$ uses the least number of iterations while RCD uses the most.  For FASD-type algorithms, we report the number of subspaces obtained by the multilevel decomposition. Due to such a construction,  we can see that $J < 2N$. Following the discussion in Section~\ref{sec:RPSD}, we expect that the number of iterations and the number of epochs behave like $\mathcal{O}(N)$ and $\mathcal{O}(1)$, respectively.  This is confirmed by the numerical results in Table~\ref{tab:FASD} and demonstrates that, due to the good choice of the preconditioner and proper subspace decomposition, RFASD and its variants, RFASD$_{\rm{perm}}$ and cyclic FASD, can achieve optimal computational complexity for Example~\ref{prob:NW}. Finally, for FASD-type algorithms, we observe that RFASD$_{\rm{perm}}$ seems to be the best choice.  To better understand the convergence of RFASD$_{\rm{perm}}$, especially its comparison with RFASD is an open question and a subject of our future research.  Nevertheless, we recommend to use RFASD$_{\rm{perm}}$ in practice if possible.

\section{Conclusions and Future Work}\label{sec:conclusions}
%----------------------------------------------------------------------
In this paper, we have derived a randomized version of fast subspace descent methods. We first find a stable space decomposition for $\mathbb R^N$ and then randomly chose a subspace to apply a gradient descent step.  

We have also developed the convergence analysis which shows the convergence of the proposed method can be measured by the condition number in a new $A$-norm and thus can achieve faster convergence if the condition number is much smaller than the standard one measured in $\ell^2$-norm and the space decomposition is $A$-stable. 

For the Nesterov's ``worst" problem, we have found a stable and multilevel space decomposition and shown both theoretically and numerically the optimal convergence rate. However, we haven't discussed how to design a stable space decomposition for other benchmark optimization problems, which is a subject of our ongoing work.

In the application of data science, there is no natural grid hierarchy to construct the space decomposition. 
Algebraic multigrid methods (AMG) will be a more appropriate approach.
There is still very little theory and algorithm on AMG for nonlinear equations and optimization problems~\cite{Treister;Turek;Yavneh:2016multilevel,Ponce;Bindel;Vassilevski:2018Nonlinear}. In the future, we propose to develop an algebraic FASD method for nonlinear optimization problems. 

%In the future, we propose to develop an algebraic FASD method for nonlinear problems. An inexact nonlinear Gauss-Seidel smoother corresponds to the basis decomposition of the space (e.g. coordinate descent methods) can be implemented algebraically and used as a non-linear smoother. The covariance matrix of random variables plays a more important role in data science and can be used to guide the coarsening. The related method is the so-called least angle regression \cite{Efron;Hastie;Johnstone;Tibshirani:2004regression,Hesterberg;Choi;Meier;Fraley:2008penalized}. 
%The off-diagonal entries of the normalized covariance matrix represent the correlation between the random variables and the most correlated variables (their angle is close to zero) can be aggregated together as one coarse variable. 

%\newpage
\bibliographystyle{abbrv}
\bibliography{rfas}

\begin{thebibliography}{10}

\bibitem{BeckTetruashvili2013}
A.~Beck and L.~Tetruashvili.
\newblock On the {{Convergence}} of {{Block Coordinate Descent Type Methods}}.
\newblock {\em SIAM Journal on Optimization}, 23(4):2037--2060, Jan. 2013.

\bibitem{bertsekasNonlinearProgramming1997}
D.~P. Bertsekas.
\newblock Nonlinear programming.
\newblock {\em Journal of the Operational Research Society}, 48(3):334--334,
  1997.

\bibitem{bertsekasParallelDistributedComputation1989}
D.~P. Bertsekas and J.~N. Tsitsiklis.
\newblock {\em Parallel and Distributed Computation: Numerical Methods},
  volume~23.
\newblock {Prentice hall Englewood Cliffs, NJ}, 1989.

\bibitem{Brandt1977}
A.~Brandt.
\newblock Multi-level adaptive solutions to boundary-value problems.
\newblock {\em Mathematics of Computation}, 31(138):333--390, 1977.

\bibitem{Chen;Hu;Wise:2018Convergence}
L.~{Chen}, X.~{Hu}, and S.~M. {Wise}.
\newblock Convergence analysis of the fast subspace descent methods for convex
  optimization problems.
\newblock {\em Mathematics of Computation}, page To appear, 2020.

\bibitem{Gelman;Mandel:1990multilevel}
E.~Gelman and J.~Mandel.
\newblock On multilevel iterative methods for optimization problems.
\newblock {\em Mathematical Programming}, 48(1-3):1--17, 1990.

\bibitem{Gratton;Sartenaer;Toint:2008Recursive}
S.~Gratton, A.~Sartenaer, and P.~L. Toint.
\newblock Recursive trust-region methods for multiscale nonlinear optimization.
\newblock {\em SIAM Journal on Optimization}, 19(1):414--444, 2008.

\bibitem{greengardFastAlgorithmParticle1987}
L.~Greengard and V.~Rokhlin.
\newblock A fast algorithm for particle simulations.
\newblock {\em Journal of Computational Physics}, 73(2):325--348, Dec. 1987.

\bibitem{hackbuschMultiGridMethodsApplications2013}
W.~Hackbusch.
\newblock {\em Multi-{{Grid Methods}} and {{Applications}}}.
\newblock {Springer Science \& Business Media}, Mar. 2013.

\bibitem{huRandomizedFaulttolerantMethod2019a}
X.~Hu, J.~Xu, and L.~T. Zikatanov.
\newblock Randomized and fault-tolerant method of subspace corrections.
\newblock {\em Research in the Mathematical Sciences}, 6(3):29, Aug. 2019.

\bibitem{leventhalRandomizedMethodsLinear2010}
D.~Leventhal and A.~S. Lewis.
\newblock Randomized {{Methods}} for {{Linear Constraints}}: {{Convergence
  Rates}} and {{Conditioning}}.
\newblock {\em Mathematics of Operations Research}, 35(3):641--654, Aug. 2010.

\bibitem{Lewis;Nash:2005problems}
R.~M. Lewis and S.~G. Nash.
\newblock Model problems for the multigrid optimization of systems governed by
  differential equations.
\newblock {\em SIAM Journal on Scientific Computing}, 26(6):1811--1837, 2005.

\bibitem{lu2017randomized}
Z.~Lu.
\newblock Randomized block proximal damped newton method for composite
  self-concordant minimization.
\newblock {\em SIAM Journal on Optimization}, 27(3):1910--1942, 2017.

\bibitem{luoConvergenceCoordinateDescent1992}
Z.~Q. Luo and P.~Tseng.
\newblock On the convergence of the coordinate descent method for convex
  differentiable minimization.
\newblock {\em Journal of Optimization Theory and Applications}, 72(1):7--35,
  Jan. 1992.

\bibitem{Nash.S2000}
S.~G. Nash.
\newblock {A Multigrid Approach to Discretized Optimization Problems}.
\newblock {\em Journal of Optimization Methods and Software}, 14:99--116, 2000.

\bibitem{nesterov2012efficiency}
Y.~Nesterov.
\newblock Efficiency of coordinate descent methods on huge-scale optimization
  problems.
\newblock {\em SIAM Journal on Optimization}, 22(2):341--362, 2012.

\bibitem{nesterov2013introductory}
Y.~Nesterov.
\newblock {\em Introductory lectures on convex optimization: A basic course},
  volume~87.
\newblock Springer Science \& Business Media, 2013.

\bibitem{ortegaIterativeSolutionNonlinear2000}
J.~M. Ortega and W.~C. Rheinboldt.
\newblock {\em Iterative {{Solution}} of {{Nonlinear Equations}} in {{Several
  Variables}}}.
\newblock Classics in {{Applied Mathematics}}. {Society for Industrial and
  Applied Mathematics}, Jan. 2000.

\bibitem{polyakIntroductionOptimizationOptimization1987}
B.~T. Polyak.
\newblock Introduction to optimization. optimization software.
\newblock {\em Inc., Publications Division, New York}, 1, 1987.

\bibitem{Ponce;Bindel;Vassilevski:2018Nonlinear}
C.~Ponce, D.~Bindel, and P.~S. Vassilevski.
\newblock A nonlinear algebraic multigrid framework for the power flow
  equations.
\newblock {\em SIAM Journal on Scientific Computing}, 40(3):B812--B833, 2018.

\bibitem{rokhlinRapidSolutionIntegral1985}
V.~Rokhlin.
\newblock Rapid solution of integral equations of classical potential theory.
\newblock {\em Journal of Computational Physics}, 60(2):187--207, Sept. 1985.

\bibitem{strohmer2009randomized}
T.~Strohmer and R.~Vershynin.
\newblock A randomized kaczmarz algorithm with exponential convergence.
\newblock {\em Journal of Fourier Analysis and Applications}, 15(2):262, 2009.

\bibitem{stuben2001review}
K.~St{\"u}ben.
\newblock A review of algebraic multigrid.
\newblock In {\em Numerical Analysis: Historical Developments in the 20th
  Century}, pages 331--359. Elsevier, 2001.

\bibitem{Tai;Xu:2001Global}
X.-C. Tai and J.~Xu.
\newblock {Global and uniform convergence of subspace correction methods for
  some convex optimization problems}.
\newblock {\em Mathematics of Computation}, 71(237):105--125, may 2001.

\bibitem{Treister;Turek;Yavneh:2016multilevel}
E.~Treister, J.~S. Turek, and I.~Yavneh.
\newblock A multilevel framework for sparse optimization with application to
  inverse covariance estimation and logistic regression.
\newblock {\em SIAM Journal on Scientific Computing}, 38(5):S566--S592, 2016.

\bibitem{trottenbergMultigrid2000}
U.~Trottenberg, C.~W. Oosterlee, and A.~Schuller.
\newblock {\em Multigrid}.
\newblock {Elsevier}, 2000.

\bibitem{tsengConvergenceBlockCoordinate2001}
P.~Tseng.
\newblock Convergence of a {{Block Coordinate Descent Method}} for
  {{Nondifferentiable Minimization}}.
\newblock {\em Journal of Optimization Theory and Applications},
  109(3):475--494, June 2001.

\bibitem{Wright2015}
S.~J. Wright.
\newblock Coordinate descent algorithms.
\newblock {\em Mathematical Programming}, 151(1):3--34, Jun 2015.

\bibitem{Xu:1992Iterative}
J.~Xu.
\newblock Iterative methods by space decomposition and subspace correction.
\newblock {\em SIAM review}, 34(4):581--613, 1992.

\bibitem{Yavneh:2006multigrid}
I.~Yavneh.
\newblock Why multigrid methods are so efficient.
\newblock {\em Computing in science \& engineering}, 8(6):12--22, 2006.

\end{thebibliography}
	
\end{document}